\documentclass[a4paper]{article}

\RequirePackage{amsthm,amsmath,amsfonts,amssymb}
\RequirePackage[numbers]{natbib}
\usepackage{amsbsy}
\usepackage{graphicx}

\usepackage{tikz}

\newtheorem{proposition}{Proposition}
\newtheorem{theorem}{Theorem}
\newtheorem{lemma}{Lemma}
\newtheorem{corollary}{Corollary}

\def\d{\mathop{\hbox{d}}\nolimits}
\def\v{\mathop{\hbox{vol}}\nolimits}

\def\He{\mathop{\hbox{Hess}}\nolimits}

\def\C{\mathop{\mathcal C}\nolimits}
\def\H{\mathop{\mathcal H}\nolimits}

\def\N{\mathop{\mathcal N}\nolimits}
\def\Q{\mathop{\mathcal Q}\nolimits}
\def\S{\mathop{\mathcal S}\nolimits}
\def\T{\mathop{\mathcal T}\nolimits}
\def\U{\mathop{\mathcal U}\nolimits}
\def\V{\mathop{\mathcal V}\nolimits}

\def\n{\mathop{\boldsymbol n}\nolimits}
\def\M{\mathop{\boldsymbol M}\nolimits}

\DeclareMathOperator*{\argmin}{arg\,min}

\begin{document}

\title{Central limit theorem for intrinsic Fr\'echet means in smooth compact Riemannian manifolds}

\author{Thomas Hotz     \footnote{Technische Universit\"at of Ilmenau, Germany} \, ,  Huiling Le \footnote{University of Nottingham, United Kingdom} \, ,
Andrew T.A. Wood \footnote{Australian National University, Canberra, Australia} \footnote{Corresponding author}}
\date{}
\maketitle

\begin{abstract}
We prove a central limit theorem (CLT) for the Fr\'echet mean of independent and identically distributed observations in a compact Riemannian manifold assuming that the population Fr\'echet mean is unique.  Previous general CLT results in this setting have assumed that the cut locus of the Fr\'echet mean lies outside the support of the population distribution.  So far as we are aware, the  CLT in the present paper is the first which allows the cut locus to have co-dimension one or two when it is included in the support of the distribution.  A key part of the proof is establishing an asymptotic approximation for the parallel transport of a certain vector field.  Whether or not a non-standard term arises in the CLT depends on whether the co-dimension of the cut locus is one or greater than one: in the former case a non-standard term  appears but not in the latter case.  This is the first paper to give a general and explicit expression for the non-standard term which arises when the co-dimension of the cut locus is one.
\end{abstract}

\section{Introduction}

The  Fr\'echet mean,  the natural setting for which is a metric space, is defined as the point, or set of  points, in the space for which the sum of squared distances is minimised.  In Euclidean spaces and normed vector spaces, the Fr\'echet mean is the standard linear mean.  More generally, it extends the concept of the mean to nonlinear spaces.  In this paper we focus on  the  large sample behaviour of the sample Fr\'echet mean based on the intrinsic distance in smooth, compact Riemannian manifolds. 

Central limit theory for Fr\'echet means on compact Riemannian manifolds has been an ongoing topic of research for over 20 years.  The principal source of difficulty in proving a general central limit theorem for the intrinsic Fr\'echet mean is due to the so-called  cut locus of a manifold.  Roughly speaking, the cut locus of a point $x$ in a manifold $\M$ is the set of points $z \in \M$ such that there exists more than one distance-minimising geodesic from $x$ to $z$. This non-uniqueness produces non-smooth behaviour in the estimating function for the Fr\'echet mean.  However, despite the challenge posed by the cut locus, there has been some  progress in this area, typically with the limitation that the cut locus of the population Fr\'echet mean is assumed  to lie outside the support of the population distribution.

For an account of nonparametric inference for manifold-valued data see Bhattacharya and Bhattacharya \cite{BB}.  Significant contributions on central limit theorems (CLTs)  for the Fr\'echet mean in compact Riemannian manifolds include the following.  The papers of Bhattacharya and Patrangenaru \cite{BP1}, \cite{BP2} were the first to lay out an extensive Fr\'echet central limit theory for manifolds, covering both intrinsic and extrinsic means; Kendall and Le \cite{KL}  proved a CLT for Fr\'echet means based on independent but not necessarily identically distributed manifold-valued random variables;  Bhattacharya and Lin \cite{BhLi} considered a more general metric space setting than just manifolds  but also derived results of interest for manifolds; Eltzner and Huckemann \cite{EH} obtained further extensions  and they also discussed a phenomenon that they call smeariness; moreover Eltzner et al. \cite{EGHT} proved a further CLT and developed the concepts of topological stability and metric continuity of the cut locus which we make use of later in the paper.   However, all of the CLTs  for Fr\'echet means in general compact Riemannian  manifolds given in the contributions mentioned above, and to the best of our knowledge all of the the relevant literature, with the exception of Bhattacharya and Lin \cite{BhLi},  assume that the relevant population distribution has support which excludes the cut locus.

The only CLT for Fr\'echet means in general compact Riemannian manifolds in the contributions mentioned above, and to the best of our knowledge in all of the relevant literature, which does not assume that the relevant population distribution has support which excludes the cut locus is given by Theorem 3.3 in Bhattacharya and Lin \cite{BhLi}. They essentially showed that the Fr\'echet mean exhibits standard behaviour also when the cut-locus of small balls around the population mean carry mass which goes to zero faster than the radius raised to the manifold's dimension plus two. This will essentially be the case if the distribution is absolutely continuous with respect to the Riemannian volume measure and the cut-locus is of co-dimension three as is the case for three- and higher-dimensional spheres; cf. Corollary 3.5 in Bhattacharya and Lin \cite{BhLi}. In fact, the authors remark that they can treat the two-dimensional sphere only under support restrictions excluding the cut locus (see their Remark 3.7).  However, we speculate that it may be possible to use results along the lines of Brown \cite{BRO}, see also Ritov \cite{YR}, to prove a standard CLT for the Fr\'echet mean in the case of $S^2$, where the cut locus has co-dimension 2,  but we have not yet  investigated all of the details.  Here, we take a different approach to that problem.

At the outset it was not clear whether the CLT for the intrinsic Fr\'echet mean on compact Riemannian manifolds exhibits standard behaviour but with technically difficult proofs or whether non-standard behaviour can occur.  The article by Hotz and Huckemann \cite{HH}, who considered the 
intrinsic Fr\'echet mean on the circle, $S^1$,  settled the matter by showing that highly non-standard behaviour occurs in this setting.     This sets the scene for the currently open question of the appropriate form of the central limit theorem for the intrinsic Fr\'echet  mean in a general compact Riemannian manifold.   

The principal  aims of this paper are (i) to clarify when non-standard behaviour of the Fr\'echet mean in compact Riemannian manifolds occurs; and (ii) to characterise the non-standard behaviour when it does occur.   Specifically, we allow the support of the population distribution to include the cut locus and only a mild regularity assumption is made in this regard.   A key part of the proof is establishing an asymptotic approximation for the parallel transport of a certain vector field.  Whether or not a non-standard term arises in the CLT depends on whether the co-dimension of the cut locus relative to $\M$ is $1$ or greater than 1: in the former case a non-standard term will  appear but not in the latter case.  The non-standard term which arises when the co-dimension of the cut locus is 1 is precisely characterised.  

The main results of the paper, Theorem 1 and Theorem 2,  are stated in Section 2 and are proved in Section 3 and Section 4, respectively.

\section{Main Results}

\subsection{Central Limit Theorem}

Let $\M$ be a compact and connected Riemannian manifold (without boundary) of dimension $m$ and let $\rho$ denote the distance function on $\M\times\M$ induced by the Riemannian metric.  Suppose that $\mu$ is a probability measure on $M$.  The Fr\'echet function $F_\mu$ of $\mu$ is defined as
\begin{equation}
F_\mu(x)=\int_{\M}\rho(x,y)^2\d\mu(y),\qquad x\in\M.
\label{F_mu}
\end{equation}
Since $\M$ is compact, $F_\mu(x)<\infty$ for all $x\in\M$.  The population Fr\'echet mean is defined by
\[
x_0=\argmin_{x \in M}F_\mu(x).
\]
For some $\mu$, $x_0$ will consist of a subset of $\M$ rather than a single point in $\M$.   It will be  assumed throughout the paper that $x_0$ is unique.

Suppose $\xi_1, \ldots , \xi_n \in \M$ is a  random sample drawn independently from $\mu$. Then, the set of sample Fr\'echet means is defined by
\begin{equation}
\mathcal{G}_n=\argmin_{x \in \M} \sum_{i=1}^n \rho(x,\xi_i)^2,
\label{sample_mean}
\end{equation}
where $\mathcal{G}_n \subset \M$ is the set of global minima of $n^{-1} \sum_{i=1}^n \rho(\xi_i, y)^2$.  In those cases where $\mathcal{G}_n$ is not a singleton set, it is assumed that a measureable selection $\hat{\xi}_n \in \mathcal{G}_n$ has been made, so that $\hat{\xi}_n \in \mathcal{G}_n$ is a measureable random element in the case where $\mathcal{G}_n$ is not a singleton set.     

The following result, proved in Section 5.1, makes use of the strong laws of large numbers proved by Ziezold \cite{ZZ} and  Evans and Jaffe \cite{EJ}. 

\begin{proposition}
Assume that (i) $\M$ is compact and (ii) $x_0 \in \M$ is the unique population Fr\'echet mean of $\mu$.  For each $n$, let  $\hat{\xi}_n \in \mathcal{G}_n$ denote any measureable selection from $\mathcal{G}_n$.  Then $\rho(x_0, \hat{\xi}_n) \overset{a.s.} \to 0$ as $n \rightarrow \infty$.
\end{proposition}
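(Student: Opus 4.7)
My plan is to invoke the strong consistency results of Ziezold \cite{ZZ} and Evans--Jaffe \cite{EJ} for Fr\'echet means in a separable metric space, and then to use uniqueness of $x_0$ together with compactness of $\M$ to upgrade the set-valued consistency to genuine a.s.\ convergence of the selected minimisers $\hat{\xi}_n$ to $x_0$.

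First I would check that the setup is clean. Since $\M$ is compact, $(\M,\rho)$ is a compact (hence separable) metric space on which $\rho$ is bounded; by dominated convergence the Fr\'echet function $F_\mu$ in \eqref{F_mu} is finite and continuous on $\M$, and similarly the empirical Fr\'echet function $F_n(x) = n^{-1}\sum_{i=1}^n \rho(x,\xi_i)^2$ is continuous. Compactness therefore ensures that both $\mathcal{G}_n$ and the population Fr\'echet mean set are non-empty and closed, and a measurable selection $\hat{\xi}_n \in \mathcal{G}_n$ exists (e.g.\ by a Kuratowski--Ryll-Nardzewski argument in this compact metric setting).

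The main step is to apply the Ziezold/Evans--Jaffe strong law: with probability one, every cluster point of any sequence $\hat{\xi}_n \in \mathcal{G}_n$ lies in the set of population Fr\'echet means. Equivalently,
\[
\bigcap_{n\ge 1}\,\overline{\bigcup_{k\ge n}\mathcal{G}_k}\;\subseteq\;\{x_0\}\qquad\text{a.s.},
\]
where we have used the hypothesis that $x_0$ is the unique population Fr\'echet mean. I would cite this as the essential input from \cite{ZZ} and \cite{EJ}; the only check is that the moment conditions required in those papers are trivial here because $\rho$ is bounded on the compact manifold $\M$.

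Finally I would upgrade the set-valued statement to $\rho(x_0,\hat{\xi}_n) \to 0$ a.s.\ using compactness: on the almost-sure event above, any subsequence of $\hat{\xi}_n$ has a further convergent subsequence (by compactness of $\M$) whose limit must lie in $\{x_0\}$ by the displayed inclusion, so the entire sequence $\hat{\xi}_n$ converges to $x_0$. I do not anticipate a serious obstacle: the compactness of $\M$ removes the usual tightness/escape-to-infinity worries, and the boundedness of $\rho$ removes the moment conditions needed in \cite{ZZ,EJ}, so the argument is essentially a direct citation followed by the standard ``cluster-point-unique $\Rightarrow$ convergence'' step.
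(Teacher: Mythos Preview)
Your proposal is correct and follows essentially the same route as the paper: invoke Ziezold's strong law to obtain $\bigcap_{n\ge 1}\overline{\bigcup_{k\ge n}\mathcal{G}_k}\subseteq\{x_0\}$ a.s., then use compactness of $\M$ together with uniqueness of $x_0$ to upgrade this to $\rho(x_0,\hat{\xi}_n)\to 0$ a.s. The paper's version is slightly terser (it does not spell out the measurable-selection or moment-condition checks), but the argument is the same.
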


Let $\mathcal{T}_x(\M)$ denote the tangent space at $x \in \M$ and  write $\exp_x(v)$ to denote the exponential map, which maps a point $v \in \mathcal{T}_x(\M)$ to the point $\exp_x(v) \in \M$. The inverse exponential (or log) map, denoted $\exp_x^{-1}(y)$, maps a point $y \in \M\setminus\mathcal C_x$ to the point $\exp_x^{-1}(y) \in \mathcal{T}_x(\M)$, where $\mathcal C_x$ denotes the cut locus of $x$.  See, for example, Chavel \cite{CHAV} 
for terminology.  Also, define
\begin{equation}
G_\mu(x)=\int_{\M} \exp_x^{-1}(\xi)\,1_{\{\xi\not\in\mathcal C_x\}}\d\mu(\xi), \hskip 0.2truein x \in M,
\label{G(x)}
\end{equation}
where $1_A$ denotes the indicator function of a set $A$.
Note that $\{G_\mu (x): \, x \in \M\}$ is a vector field on $\M$.   It follows from the result of \cite{LB2} that
\[G_\mu(x_0)=\int_{\M} \exp_{x_0}^{-1}(\xi)\,\d\mu(\xi)=0\in\mathcal{T}_{x_0}(\M)\]
and that, with probability one under the product measure determined by $\mu$,
\begin{equation}
G_{\hat\mu_n}(\hat\xi_n)=\frac{1}{n}\sum_{i=1}^n\exp_{\hat\xi_n}^{-1}(\xi_i)=0\in\mathcal{T}_{\hat\xi_n}(\M),
\label{stardust}
\end{equation}
where $\hat{\mu}_n$ is the empirical distribution on $\M$ based on the random sample $\xi_1, \ldots , \xi_n$.

Before stating Theorem 1 and Theorem 2, we mention a number of relevant facts.  We denote by $D$ the covariant derivative and by $\nabla$ the gradient operator, both defined on $\M$. 
For $x'\not\in\mathcal C_x$,
\begin{equation}
	\nabla\rho_x(x')^2=-2\exp_{x'}^{-1}(x)
	\label{grad_d2}
\end{equation}
(cf.  Jost \cite{JJ}, p.203).  Moreover, the Hessian, $\He^f$, of a smooth function $f$ on $\M$ is the (symmetric) $(0,2)$-tensor field such  that, for any vector fields $U$ and $V$ on $\M$,
\begin{eqnarray}
	\He^f(U,V)(x')=\langle D_V(\nabla f),U\rangle(x')
	\label{eqn1a}
\end{eqnarray}
(cf. O'Neill \cite{BO}  p.86). That is, $\hbox{Hess}^{\rho_x^2}$ can be expressed as
\begin{eqnarray}
	-\frac{1}{2}\hbox{Hess}^{\rho_x^2}(V(x'),U(x'))=\langle(H(x'\mid x))(V(x')),\,U(x')\rangle,
	\label{eqn3.1b}
\end{eqnarray}
for any smooth vector fields $U$, $V$ on $\M$ and any $x'\in\M\setminus\C_x$, where $H(x'\mid x)$ is the $(1,1)$-tensor such that, for any smooth vector field $V$ on $\M$,
\begin{eqnarray}
	(H(x'\mid x))(V(x'))=D_{V(x')}\exp^{-1}_{x'}(x).
	\label{eqn3.1a}
\end{eqnarray}

For any $x\in\M$ and $\delta>0$,  define the sets 
\begin{equation}
\mathcal{A}_\delta(x) = \bigcup_{y \in B_\delta(x)} \mathcal{C}_y
\label{curlyAdelta}
\end{equation}
and
\begin{equation}
\mathcal{B}_\delta(x)= \bigcup_{z \in \mathcal{C}_x} B_\delta(z) =\{y \in \M: \rho(z,y)<\delta \hskip 0.1truein \hbox{for some} \hskip 0.1truein  z \in \mathcal{C}_x\},
\label{curlyBdelta}
\end{equation}
where $B_\delta (x) = \{x' \in \M:\, \rho(x,x') < \delta\}$.  The concepts of topological stability and metrical continuity of the cut locus are relevant in the present context; see definitions 3.6 and 3.10 in Eltzner et al. \cite{EGHT}.   Corollary 3.8 and Proposition 3.11 in Eltzner et al. \cite{EGHT} prove that both topological stability and metric continuity of the cut locus hold for compact Riemannian manifolds. Here, it will be slightly more convenient to use the concept of metric continuity at any point $x \in \M$.  In the notation defined above, metric continuity entails the following.
\begin{proposition}
If $\M$ is a compact Riemannian manifold and  $x \in \M$, 	then for any $\delta>0$ there exists a $\delta_1>0$ such that
\begin{equation}
	\mathcal{A}_{\delta_1}(x) \subseteq \mathcal{B}_\delta(x).
	\label{metric_stability}
\end{equation}
\end{proposition}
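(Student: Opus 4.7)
The plan is to derive Proposition 2 directly from the metric continuity of the cut locus at $x$, which is established in Proposition 3.11 of Eltzner et al.~\cite{EGHT}. Metric continuity at $x$ asserts that the map $y \mapsto \mathcal{C}_y$ is continuous at $y=x$ in the Hausdorff distance on closed subsets of $\M$. In particular, given any $\delta > 0$ there exists $\delta_1 > 0$ such that, for every $y \in B_{\delta_1}(x)$, the one-sided Hausdorff condition $\mathcal{C}_y \subseteq \mathcal{B}_\delta(x)$ holds, since every point of $\mathcal{C}_y$ is then within distance $\delta$ of some point of $\mathcal{C}_x$. Taking the union over $y \in B_{\delta_1}(x)$ on the left, while noting that the right-hand side is independent of $y$, yields $\mathcal{A}_{\delta_1}(x) \subseteq \mathcal{B}_\delta(x)$, which is exactly the conclusion. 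In this sense, Proposition 2 is a repackaging of the Eltzner et al.\ result in the $\mathcal{A},\mathcal{B}$ notation introduced in (\ref{curlyAdelta}) and (\ref{curlyBdelta}).

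For a self-contained argument, I would proceed by contradiction. Suppose the claim failed for some $\delta > 0$. Then for each $n$ there would exist $y_n \in B_{1/n}(x)$ and $z_n \in \mathcal{C}_{y_n}$ with $\rho(z_n, \mathcal{C}_x) \geq \delta$. Writing $z_n = \exp_{y_n}(t_n v_n)$ with $v_n$ a unit tangent vector at $y_n$ and $t_n$ the cut time along $v_n$, compactness of $\M$ together with compactness of the unit tangent bundle over a small neighbourhood of $x$ lets me pass to a subsequence along which $v_n \to v_\infty$, $t_n \to t_\infty$, and hence $z_n \to z_\infty = \exp_x(t_\infty v_\infty)$, while still $\rho(z_\infty, \mathcal{C}_x) \geq \delta$ and in particular $z_\infty \notin \mathcal{C}_x$.

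The main obstacle is then to push this to a contradiction by showing $z_\infty \in \mathcal{C}_x$. This reduces to the continuity of the cut-time function $(y,v) \mapsto t_{\mathrm{cut}}(y,v)$ on the unit tangent bundle: once one knows $t_n = t_{\mathrm{cut}}(y_n,v_n) \to t_{\mathrm{cut}}(x, v_\infty)$, the limit point satisfies $z_\infty = \exp_x\bigl(t_{\mathrm{cut}}(x,v_\infty) v_\infty\bigr) \in \mathcal{C}_x$, contradicting $\rho(z_\infty, \mathcal{C}_x) \geq \delta$. Continuity of the cut-time function on compact Riemannian manifolds is the classical non-trivial analytic input, and it is precisely what is encapsulated by the topological stability and metric continuity theorems of \cite{EGHT}. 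That is why the cleanest route is simply to quote their Proposition 3.11, as in the first paragraph above.
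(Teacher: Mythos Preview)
Your proposal is correct. The first paragraph simply invokes Eltzner et al.'s metric continuity result, which the paper itself acknowledges just before stating Proposition~2; the whole point of Appendix~A is to give an \emph{alternative} proof, so quoting \cite{EGHT} is valid but sidesteps what the paper is doing there. Your self-contained contradiction argument is also sound, and it does differ from the paper's route. The paper argues directly and constructively: it equips the tangent bundle $\T\!\M$ with the Sasaki metric, observes that the cut-point map $R:(y,w)\mapsto\exp_y(\tilde\rho(y,w)w)$ is uniformly continuous on the compact unit sphere bundle, and then for any $y'\in\C_y$ with $\rho(x,y)<\delta_1$ explicitly produces the companion point $x'\in\C_x$ by parallel-transporting the initial direction $w$ from $y$ to $x$ along the minimal geodesic and applying $R$; the Sasaki geometry guarantees $\hat\rho((x,v),(y,w))=\rho(x,y)<\delta_1$, so uniform continuity of $R$ gives $\rho(x',y')<\delta$. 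Your argument instead extracts a convergent subsequence and appeals to continuity of the cut-time function to force the limit into $\C_x$. Both rest on the same analytic input (continuity of the cut-time function on the unit sphere bundle of a compact manifold), but the paper's version is constructive and avoids a contradiction argument, at the cost of introducing the Sasaki metric; yours is a more standard compactness-and-subsequences proof that does not need any auxiliary metric on $\T\!\M$.
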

In Appendix A we give a different proof for Proposition 2 to that given by Eltzner et al. \cite{EGHT}.

Let $\textrm{vol}_{\M}$ denote the Riemannian volume measure on $\M$.   The key  linearization result we need is the the following.

\begin{theorem}
Assume that (i) $\M$ is a compact, connected  Riemannian manifold; (ii) $x_0 \in \M$ is the unique population Fr\'echet mean of $\mu$; (iii) for $\delta>0$ sufficiently small, $\mu$, restricted to $\mathcal{B}_\delta(x_0)$ defined in $(\ref{curlyBdelta})$, is absolutely continuous with respect to $\textrm{vol}_{\M}$ and the corresponding Radon-Nicodym derivative has a version $\psi$ which is continuous on $\mathcal{B}_\delta(x_0)$ ; (iv) as $\delta \downarrow 0$,  $\mathcal{A}_\delta(x_0)$ in $(\ref{curlyAdelta})$ satisfies $\textrm{vol}_{\M}(\mathcal{A}_\delta(x_0))= O(\delta)$; (v)  the integral
\begin{equation}
	\int_{\M} H(x_0\vert \xi) \d \mu(\xi)
	\label{Hessian21}
\end{equation}
exists, where $H(\cdot\vert \cdot)$ is defined in (\ref{eqn3.1a}).
 Then the vector field $G_\mu(x)$ admits the following linearization for $x \in \M$ in a neighbourhood of $x_0$:
\begin{equation}
\Pi_{x,x_0} G_\mu (x)=-\Psi_{\mu}(x_0) \exp_{x_0}^{-1}(x)+ R(x,x_0),
\label{psi}
\end{equation}
where $\Pi_{x,x_0}$ denotes parallel transport from $\T_x(\M)$ to $\T_{x_0}(\M)$ along the (unique) shortest geodesic between $x$ and $x_0$, $\vert \vert R(x,x_0)\vert \vert = o(\rho(x,x_0))$ and $\Psi_{\mu}(x_0)$ is a $(1,1)$-tensor defined below in (\ref{eqn5c}).
\end{theorem}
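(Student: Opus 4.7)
The plan is to Taylor-expand $y\mapsto\exp_y^{-1}(\xi)$ around $y=x_0$ pointwise in $\xi$, integrate against $\mu$, and carefully track the discontinuity of $\exp^{-1}_\cdot(\xi)$ caused by the cut locus. As a preliminary, assumptions (iii) and (iv) imply $\textrm{vol}_{\M}(\C_{x_0})=0$, and absolute continuity combined with Proposition 2 (which confines $\C_x$ inside $\mathcal{B}_\delta(x_0)$ for $x$ close to $x_0$) gives $\mu(\C_x)=0$; hence the indicator in (\ref{G(x)}) may be dropped modulo $\mu$-null sets.

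Fix $\delta>0$ small and decompose $G_\mu(x)=I_1(x)+I_2(x)$, integrating respectively over $\M\setminus\mathcal{A}_\delta(x_0)$ and $\mathcal{A}_\delta(x_0)$. On $\M\setminus\mathcal{A}_\delta(x_0)$, by definition of $\mathcal{A}_\delta$, the map $y\mapsto\exp_y^{-1}(\xi)$ is smooth throughout a neighborhood of $x_0$; differentiating along the minimizing geodesic from $x_0$ to $x$ and invoking (\ref{eqn3.1a}) gives
\[
\Pi_{x,x_0}\exp_x^{-1}(\xi)=\exp_{x_0}^{-1}(\xi)+H(x_0\vert\xi)\bigl(\exp_{x_0}^{-1}(x)\bigr)+r(x,\xi),
\]
with $\|r(x,\xi)\|=o(\rho(x,x_0))$ uniformly in $\xi$ on this set. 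Integrating against $\mu$ and using (v) to pass the limit under the integral yields the standard linear term with coefficient $\int_{\M\setminus\mathcal{A}_\delta(x_0)} H(x_0\vert\xi)\,d\mu(\xi)$, which tends to $\int_\M H(x_0\vert\xi)\,d\mu(\xi)$ as $\delta\downarrow 0$.

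The substantive work is the contribution $\Pi_{x,x_0}I_2(x)$, whose total $\mu$-mass is $O(\delta)$ by (iii)–(iv). For $\xi\in\mathcal{A}_\delta(x_0)\setminus(\C_x\cup\C_{x_0})$ both logs are defined, but $\exp_x^{-1}(\xi)$ and $\exp_{x_0}^{-1}(\xi)$ may select different preimages. I would split $\mathcal{A}_\delta(x_0)$ into a \emph{continuation} set, on which the two minimizing geodesics belong to the same homotopy class so that the Taylor bound above applies and the contribution is $o(\rho(x,x_0))$, and a \emph{jump} set, consisting of those $\xi$ caught between $\C_{x_0}$ and $\C_x$. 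By metric continuity (Proposition 2) together with (iii), the $\mu$-measure of the jump set is $O(\rho(x,x_0))$ times the $(m-1)$-Hausdorff measure of $\C_{x_0}$ when $\C_{x_0}$ has co-dimension one, and $o(\rho(x,x_0))$ when the co-dimension exceeds one. On the jump set the integrand is bounded by the diameter of $\M$, so the net contribution is a linear function of $\exp_{x_0}^{-1}(x)$ plus $o(\rho(x,x_0))$; this linear piece is the non-standard summand of $\Psi_\mu(x_0)$.

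Combining $I_1$ and $I_2$, using $G_\mu(x_0)=0$ to cancel zeroth-order pieces, and absorbing everything of order $o(\rho(x,x_0))$ into $R(x,x_0)$ produces the linearization (\ref{psi}), with $\Psi_\mu(x_0)$ identified as the sum of $\int_\M H(x_0\vert\xi)\,d\mu(\xi)$ and the jump contribution, matching (\ref{eqn5c}). The chief obstacle is the jump-set analysis: parametrizing $\C_{x_0}$ as a smooth $(m-1)$-submanifold away from its singular strata (which, having higher co-dimension, do not contribute at leading order), computing the signed normal displacement of $\C_x$ relative to $\C_{x_0}$ as an explicit linear function of $\exp_{x_0}^{-1}(x)$, and verifying that the resulting boundary-type integral defines a bounded linear map on $\T_{x_0}(\M)$—a genuine $(1,1)$-tensor. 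This is precisely the step at which the co-dimension-one versus higher-co-dimension dichotomy emerges and where metric continuity of the cut locus is indispensable.
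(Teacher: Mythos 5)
Your high-level plan — drop the indicator modulo null sets, separate a neighbourhood of $\C_{x_0}$ from its complement, Taylor-expand the smooth part, and isolate a boundary-type term from the piece that jumps across the cut locus — is the correct shape of the argument. But the two steps you flag at the end as ``the chief obstacle'' are in fact the entire technical content of the theorem, and the proposal does not actually carry them out, so there is a genuine gap rather than merely a sketch of the paper's route.

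First, the decomposition itself is less convenient than the one the paper uses, and this matters. You fix $\delta>0$ and split $\M$ into $\M\setminus\mathcal A_\delta(x_0)$ and $\mathcal A_\delta(x_0)$, with $\delta$ held fixed while $\rho(x,x_0)\to 0$. That leaves you with two unreconciled limits: the smooth part gives you $\int_{\M\setminus\mathcal A_\delta(x_0)}H(x_0\mid\xi)\,\d\mu(\xi)$, not $\int_\M H(x_0\mid\xi)\,\d\mu(\xi)$, and your subsequent claim that the continuation-set contribution inside $\mathcal A_\delta(x_0)$ is $o(\rho(x,x_0))$ cannot be right as stated — it too carries an $H$-term, and $H(x_0\mid\xi)$ generally blows up as $\xi\to\C_{x_0}$ (e.g.\ like $1/\sin\rho$ on the sphere), so neither uniformity of the Taylor remainder nor $\int_{\text{jump set}}H\,\d\mu=o(1)$ is automatic. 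The paper sidesteps this by decomposing according to whether the geodesic $\gamma$ from $x_0$ to $x$ actually crosses $\C_\xi$ — i.e.\ by the sets $\N^*_{x_0,x}$ and $\mathcal A_{x_0,x}$, which shrink with $\rho(x_0,x)$ rather than being fixed in advance; this makes the bookkeeping a single-limit affair and condition (iv) then bounds $\mu(\N^*_{x_0,x})=O(\rho(x_0,x))$ directly.

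Second, and more centrally, the assertion that on the jump set ``the net contribution is a linear function of $\exp_{x_0}^{-1}(x)$ plus $o(\rho(x,x_0))$'' is exactly what needs to be proved and is not a consequence of boundedness of the integrand plus an $O(\rho)$ measure bound; those only give $O(\rho(x,x_0))$, not linearity. The paper's Proposition 3 supplies the missing ingredient: when $\gamma$ crosses $\H_z$ at $\gamma(t_z)$, the parallel-transported log picks up an explicit jump
$\rho_z(\gamma(t_z))\,\kappa(\gamma(t_z)\mid z)\,\Pi_{\gamma(t_z),x_0}\bigl(\n(\gamma(t_z)\mid z)\bigr)$
on top of the Hessian term, coming from the difference $\nabla\phi_{1}-\nabla\phi_{2}$ of the two smooth branches of $\rho_z$ near $\H_z$ (Lemmas 2--4 and Corollaries 1--3). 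Without this pointwise jump formula you cannot even begin to see the factors $\rho$, $\kappa$, and $\n$ that define $J_\mu(x_0)$ in (\ref{eqn7}). And having obtained it, one still needs the Fubini/co-area step converting the volume integral over the slab between $\C_{x_0}$ and $\C_x$ into a surface integral over $\H_{x_0}$ against $\d\v_{\H_{x_0}}$, which is where the factor $\tau'_{y\mid x_0}(0)$ from (\ref{eqn5b}) enters. Your proposal gestures at ``computing the signed normal displacement of $\C_x$ relative to $\C_{x_0}$'' but does not produce it; producing it is precisely what yields the specific $(1,1)$-tensor in (\ref{eqn5c}) and hence the statement of the theorem.

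In short: the scaffolding is right, the co-dimension dichotomy is correctly anticipated, and you correctly identify where the work lies; but the generalised Taylor expansion across $\H_z$ and the change of variables onto $\H_{x_0}$ — the paper's Proposition 3 and the derivation of (\ref{eqn7}) — are left entirely open, and the fixed-$\delta$ decomposition introduces uniformity problems near $\C_{x_0}$ that the paper's $\rho$-scaled decomposition is designed to avoid.
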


The proof of Theorem 1,  given in Section 3, uses some involved geometric arguments.  These arguments are of potentially broader interest than just the current context.   The definition of $\Psi_{\mu}(x_0)$, which is given in the next subsection,  has a particularly interesting form when the co-dimension of the cut-locus of $x_0$ is 1.  In this case $\Psi_{\mu}(x_0)$ contains a non-standard term which we discuss in detail below, and illustrate in some examples at the end of the section.  

Note that, under assumption (iii) of Theorem 1, $G_\mu$ defined by \eqref{G(x)} can be written as
\[G_\mu(x)=\int_{\M}\exp^{-1}_x(\xi)\,\d\mu(\xi)\] 
in a neighbourhood of $x_0$ and so we have the relationship 
\[G_\mu(x)=-2\nabla F_\mu(x)\]
in that neighbourhood. Then, one immediate consequence of Theorem 1 is that the Hessian of the Fr\'echet function $F_\mu$ at $x_0$ exists and it can be expressed in terms of $\Psi_\mu$ as 
\begin{equation}
	\hbox{Hess}^{F_\mu}(U,V)(x_0)=2\langle\Psi_\mu(x_0)(U(x_0),\,V(x_0)\rangle.
\label{Hess199}
\end{equation}
In fact, a slight modification of the proof for Theorem 1 shows that the same result holds in a neighbourhood of $x_0$.

\vskip 6pt
We now state our main result, a CLT for $\hat{\xi}_n$, assumed to be a measurable selection from $\mathcal{G}_n$. 

\begin{theorem}
Suppose that assumptions (i) -- (v) of Theorem $1$ hold and that $\hat{\xi}_n$ is any measurable selection from $\mathcal{G}_n$, as in Proposition $1$.  In addition, assume (vi)  that $\Psi_\mu(x_0)$  is strictly positive definite. Then
\[
\sqrt{n} \exp_{x_0}^{-1}(\hat{\xi}_n) \overset{d} \to \mathfrak{N}_m \left \{0_m, \Psi_{\mu}(x_0)^{-1}V_0 (\Psi_{\mu}(x_0)^\top)^{-1}\right \},
\]
where $V_0=\hbox{\rm Cov}(\exp^{-1}_{x_0}(\xi_1))$.
\end{theorem}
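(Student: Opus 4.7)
The plan is to run a standard $Z$-estimator argument that combines the exact estimating equation (\ref{stardust}) with the linearization provided by Theorem 1. Starting from $G_{\hat\mu_n}(\hat\xi_n)=0$ almost surely and applying the parallel transport $\Pi_{\hat\xi_n,x_0}$, I would split
\[
0 \;=\; \Pi_{\hat\xi_n,x_0} G_\mu(\hat\xi_n) \;+\; \Pi_{\hat\xi_n,x_0}\bigl\{G_{\hat\mu_n}(\hat\xi_n) - G_\mu(\hat\xi_n)\bigr\}.
\]
By Proposition 1, $\hat\xi_n \to x_0$ almost surely, so eventually $\hat\xi_n$ lies in any prescribed neighbourhood of $x_0$; Theorem 1 then rewrites the first term as $-\Psi_\mu(x_0)\exp^{-1}_{x_0}(\hat\xi_n) + R(\hat\xi_n,x_0)$ with $\|R(\hat\xi_n,x_0)\| = o(\rho(\hat\xi_n,x_0))$.

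The main obstacle, and the core technical step, is to establish the stochastic equicontinuity statement
\[
\Pi_{\hat\xi_n,x_0}\bigl\{G_{\hat\mu_n}(\hat\xi_n) - G_\mu(\hat\xi_n)\bigr\} \;=\; G_{\hat\mu_n}(x_0) + o_P\bigl(n^{-1/2}\bigr),
\]
where $G_{\hat\mu_n}(x_0) = n^{-1}\sum_{i=1}^n \exp^{-1}_{x_0}(\xi_i)$ almost surely, because under (iii) the measure $\mu$ assigns mass zero to $\mathcal{C}_{x_0}$. The difficulty is that the integrand of $G_\mu(x)$ depends on $x$ both smoothly, through $\Pi_{x,x_0}\exp^{-1}_x(\xi)$, and discontinuously, through the indicator $\mathbf{1}_{\xi \notin \mathcal{C}_x}$, so standard smooth empirical-process tools do not apply directly. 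I would split the left-hand side into (a) a smooth piece corresponding to $\xi \notin \mathcal{B}_\delta(x_0)$, whose contribution is $o_P(n^{-1/2})$ by Donsker-type estimates with envelope of order $\rho(\hat\xi_n,x_0)$; and (b) a jump piece supported on $\xi \in \mathcal{B}_\delta(x_0)$, for which metric continuity (Proposition 2) combined with assumptions (iii) and (iv) gives $\mu(\mathcal{A}_\delta(x_0)) = O(\delta)$, while $\exp^{-1}$ is bounded on the compact manifold, yielding a second-moment contribution of order $n^{-1/2}\rho(\hat\xi_n,x_0)^{1/2}$, which is $o_P(n^{-1/2})$ once an $n^{-1/2}$ rate for $\hat\xi_n$ is in hand.

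Combining the two displays gives
\[
\Psi_\mu(x_0)\exp^{-1}_{x_0}(\hat\xi_n) \;=\; G_{\hat\mu_n}(x_0) + R(\hat\xi_n,x_0) + o_P\bigl(n^{-1/2}\bigr).
\]
By assumption (vi) there exists $c>0$ with $\|\Psi_\mu(x_0)v\| \geq c\|v\|$, while $\|R(\hat\xi_n,x_0)\|=o(\rho(\hat\xi_n,x_0))$ and $G_{\hat\mu_n}(x_0)=O_P(n^{-1/2})$ by the classical CLT; comparing norms on both sides then yields the $\sqrt{n}$-rate $\rho(\hat\xi_n,x_0) = O_P(n^{-1/2})$. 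This in turn makes $\sqrt{n}\, R(\hat\xi_n,x_0) = o_P(1)$, so that multiplying by $\sqrt{n}$ and inverting $\Psi_\mu(x_0)$ produces
\[
\sqrt{n}\, \exp^{-1}_{x_0}(\hat\xi_n) \;=\; \Psi_\mu(x_0)^{-1}\sqrt{n}\, G_{\hat\mu_n}(x_0) + o_P(1).
\]
The multivariate CLT applied to the bounded centred i.i.d.\ vectors $\exp^{-1}_{x_0}(\xi_i) \in \T_{x_0}(\M)$ with covariance $V_0$, together with Slutsky's lemma and the linear transformation $v \mapsto \Psi_\mu(x_0)^{-1}v$, then yields the stated Gaussian limit with covariance $\Psi_\mu(x_0)^{-1} V_0 (\Psi_\mu(x_0)^\top)^{-1}$.
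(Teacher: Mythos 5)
Your outline follows essentially the same route as the paper: set up the $Z$-estimator identity from $G_{\hat\mu_n}(\hat\xi_n)=0$, apply Theorem~1 to linearize $\Pi_{\hat\xi_n,x_0}G_\mu(\hat\xi_n)$, isolate the stochastic-equicontinuity step $\Pi_{\hat\xi_n,x_0}G_{\hat\mu_n}(\hat\xi_n)-\Pi_{\hat\xi_n,x_0}G_\mu(\hat\xi_n)=G_{\hat\mu_n}(x_0)+o_P(n^{-1/2})$ as the crux, then bootstrap the $\sqrt n$-rate and conclude. The one genuine difference at the top level is that you finish with Slutsky, while the paper invokes its quantitative Lemma~5 (a CDF comparison lemma); for the distributional statement as given, Slutsky suffices, so that substitution is fine.

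However, the argument for the equicontinuity step, which you correctly flag as the main obstacle, has a gap. Your estimate yields a bound of the form $\sqrt{\mathbb{E}\,\lVert \Pi_{x,x_0}Z_n(x) - Z_n(x_0)\rVert^2} = O(\rho(x,x_0)^{1/2})$ for each \emph{fixed} $x$ near $x_0$. But $\hat\xi_n$ is data-dependent, so you may not substitute $x=\hat\xi_n$ into this pointwise bound and conclude $\lVert \Pi_{\hat\xi_n,x_0}Z_n(\hat\xi_n) - Z_n(x_0)\rVert=o_P(1)$; one needs uniform control of the process $x\mapsto \Pi_{x,x_0}Z_n(x)$ over a shrinking neighbourhood of $x_0$. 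The paper obtains this by observing that the H\"older-$1/2$ $L^2(\mu)$ bound (the exponent $1/2$ being forced precisely because the cut-locus contribution is only $O(\rho(x,y))$, not $O(\rho(x,y)^2)$) implies that the class $\mathcal{F}_{\delta_0}=\{q_x : \rho(x,x_0)<\delta_0\}$ is Donsker (via the bracketing-entropy integral of van~der~Vaart--Wellner, Theorem~2.5.6), and the Donsker property implies asymptotic equicontinuity of the empirical process. That uniform statement, combined only with the consistency $\rho(\hat\xi_n,x_0)\overset{p}{\to}0$ from Proposition~1, yields $T_1:=\Pi_{\hat\xi_n,x_0}Z_n(\hat\xi_n)-Z_n(x_0)=o_P(1)$ with \emph{no rate assumption}. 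Your phrasing that the jump-piece contribution ``is $o_P(n^{-1/2})$ once an $n^{-1/2}$ rate for $\hat\xi_n$ is in hand'' is therefore circular as written (you only derive the rate afterwards) and also unnecessary: consistency, not a rate, is what the uniform argument needs. Once $T_1=o_P(1)$ is secured, the rest of your argument (comparing norms using positive definiteness of $\Psi_\mu(x_0)$ to extract $\sqrt n\,\rho(\hat\xi_n,x_0)=O_P(1)$, hence $\sqrt n\,R(\hat\xi_n,x_0)=o_P(1)$) agrees with the paper's Step~2.
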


\subsection{Discussion of assumptions}

Here we discuss the assumptions made in Theorem 1 and Theorem 2.  Assumptions (i) and (ii) in Theorem 1 define the setting that we consider.  Assumption (iii) in Theorem 1 implies a certain level of regularity of the population distribution in a neighbourhood of the cut locus of the Fr\'echet mean; some such regularity is needed for an expansion of the type (\ref{psi}) to hold.  Previous central limit theorems in this setting, such as Bhattacharya and Patragenaru \cite{BP1}, \cite{BP2} have made the much stronger assumption that the population probability density function is zero in a neighbourhood of the cut locus of the population  Fr\'echet mean.
Bhattacharya and Lin \cite{BhLi} have assumed $\mu( \mathcal A_\delta(x_0)) = o(\delta^2)$ whereas our assumptions (iii) and (iv) amount only to $\mu( \mathcal A_\delta(x_0)) = O(\delta)$.

Assumptions (iv) and (v) in Theorem 1 are largely geometric in character.  For each of these  assumptions, it would be interesting to know whether or not it holds for all smooth, compact connected manifolds when the population Fr\'echet mean is unique.  However, we do not have a proof or a counter-example to this statement in either case and we have found nothing in the literature that throws light on either question.  

Finally, assumption (vi) in Theorem 2 is a non-degeneracy assumption.  
If $\Psi_\mu(x_0)$ is non-negative definite but not of full rank then we are in the same situation as that of a smeary central limit theorem, as discussed by Eltzner and Huckemann \cite{EH}: specifically,  a central limit theorem  is expected to hold but with a non-standard convergence rate which depends on the level of smoothness of the population distribution.  Bearing in mind that $2 \Psi_\mu(x_0)$ is the Hessian of the Fr\'echet function $F_\mu(x)$, see (\ref{Hess199}), it follows that if $\Psi_\mu(x_0)$ has one or more strictly negative eigenvalues then this contradicts $x_0$ being a Fr\'echet mean due to the Hessian of the Fr\'echet function $F_\mu(x)$ in (\ref{F_mu}) not being non-negative definite, in which case $x_0$ can not be a stationary minimum of the Fr\'echet function.

\subsection{The expression of $\Psi_{\mu}(x_0)$}

The expression of $\Psi_\mu(x_0)$ comprises two terms, one associated with the Hessian of the squared distance function, away from the cut locus of $x_0$, and the other with the behaviour of the distance function on the cut locus $\mathcal C_{x_0}$ of $x_0$. Hence, 
the second term reflects the geometric structure of the manifold $\M$. 

\vskip 6pt
To make the notation more explicit, we write, for any fixed $x\in\M$, $\rho_x=\rho(x,\,\cdot\,)$. Note that $\rho_x^2$ is a smooth function away from the cut locus of $x$. 
The tensor $H(x_0\mid\cdot\,)$ which appears in (\ref{eqn3.1b}) and (\ref{eqn3.1a}) determines the first term of $\Psi_\mu(x_0)$. The construction above for $H(x_0\mid x)$ requires that $x\not\in\C_{x_0}$. Nevertheless, it follows from the result of Le and Barden \cite{LB2}  that $H(x_0\mid\xi_1)$ is well-defined with probability one, because condition (iii) of Theorem 1 implies that $\mu(\xi \in \mathcal{C}_{x_0})=0$, i.e. the cut locus of $x_0$ has zero probability under $\mu$.

\vskip 6pt
To introduce the second term of $\Psi_\mu(x_0)$, we first recall some facts on the cut locus $\mathcal C_x$ of $x$ and the behaviour of $\rho_x$ nearby. These results, explicitly or implicitly stated in Barden and Le \cite{BL1} \& Le and Barden  \cite{LB}, are given in the following lemmas. The first one is on the structure of $\mathcal C_x$, a set of co-Hausdorff-dimension at least one (or, equivalently, where the Hausdorff dimension is at most $m-1$). 

\begin{lemma}
For any $x \in \M$ there is a set $\Q_x$ of Hausdorff $(m-1)$-measure zero contained in $\C_x$ and \textit{containing} the first conjugate locus of $x$ such that $\H_x=\C_x\setminus\Q_x$ is a countable union of disjoint hyper-surfaces (co-dimension one sub-manifolds) where, for each $y\in\H_x$, there are exactly two minimal geodesics from $x$ to $y$. In particular, $\H_x$ is a Borel measurable set and $y\in\H_x$ if and only if $x\in\H_y$.
\label{le3}
\end{lemma}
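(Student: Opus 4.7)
The plan is to define $\Q_x$ as the union of the first conjugate locus of $x$ together with the set of non-conjugate cut points from which three or more distinct minimal geodesics run back to $x$.  Then $\H_x=\C_x\setminus\Q_x$ consists precisely of ordinary cut points joined to $x$ by exactly two minimising geodesics, and the proof splits into three tasks: (a) showing $\Q_x$ is Hausdorff $(m-1)$-null, (b) showing $\H_x$ is locally a smooth hypersurface, and (c) verifying the symmetry and Borel measurability claims.

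For (a), I would handle the two pieces of $\Q_x$ separately.  The first conjugate locus is the $\exp_x$-image of the set of critical points of $d\exp_x$ in $\T_x(\M)$; a stratification of this critical set by the corank of $d\exp_x$ combined with a quantitative Sard argument (as essentially carried out in Barden--Le \cite{BL1}) shows that its image has Hausdorff $(m-1)$-measure zero.  For the non-conjugate cut points with three or more minimisers, local inversion of $\exp_x$ along each branch yields several smooth squared-distance functions $\rho^{(i)\,2}$ in a neighbourhood of such a point; the locus where any two such branches coincide is a smooth hypersurface by the implicit function theorem, since the corresponding gradients are pairwise distinct by \eqref{grad_d2}, and the triple-minimiser locus sits in the transverse intersection of two such distinct hypersurfaces, hence in a codimension-two set.

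For (b), fix $y\in\H_x$ and let $v_1,v_2\in\T_x(\M)$ be its two non-conjugate pre-images.  Non-singularity of $d\exp_x$ at $v_1$ and at $v_2$ yields two smooth local inverse branches of $\exp_x$ on a small neighbourhood $U$ of $y$, and correspondingly two smooth branches $\rho_1^2,\rho_2^2$ of the squared-distance function from $x$.  By \eqref{grad_d2} their gradients at $y$ are $-2$ times the two distinct minimal initial velocities of geodesics from $y$ to $x$, so $\rho_1^2-\rho_2^2$ has non-vanishing differential at $y$ and its zero set $\{\rho_1^2=\rho_2^2\}\cap U$ is a smooth hypersurface containing $\H_x\cap U$.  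Covering $\H_x$ by countably many such neighbourhoods (second-countability of $\M$) and performing a standard disjointification realises $\H_x$ as a countable disjoint union of embedded codimension-one submanifolds.

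For (c), the equivalence $y\in\H_x\iff x\in\H_y$ follows from two symmetries: reversing a minimal geodesic from $x$ to $y$ gives a minimal geodesic from $y$ to $x$, setting up a bijection between the two sets of minimisers, while the conjugate point relation along a geodesic is symmetric in its two endpoints.  Borel measurability is inherited from the representation of $\H_x$ as a countable union of locally closed pieces.  I expect the main obstacle to be the sharp $(m-1)$-dimensional Hausdorff estimate on the first conjugate locus: upgrading from the easy ``Lebesgue measure zero in $\M$'' to ``Hausdorff $(m-1)$-measure zero'' requires a careful stratification of the critical set of $\exp_x$ by the rank of $d\exp_x$ and stratum-by-stratum use of the area formula, precisely the delicate geometric measure theoretic step handled in Barden--Le \cite{BL1} and Le--Barden \cite{LB}.
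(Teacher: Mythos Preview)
The paper does not give its own proof of this lemma: it is stated as a quotation of results ``explicitly or implicitly stated in Barden and Le \cite{BL1} \& Le and Barden \cite{LB}'', and the paragraph following the lemma merely clarifies that $\Q_x$ is exactly the set you define (first conjugate locus together with non-conjugate cut points admitting three or more minimisers) and points to Proposition~1 of \cite{BL1} and Theorem~2 of \cite{LB} for the Hausdorff-dimension estimates.  Your sketch reproduces precisely the argument of those references---including the implicit-function-theorem step for the local hypersurface structure of $\H_x$ and the stratified Sard/area-formula step for the codimension-two estimate on the conjugate locus---so your proposal and the paper's (cited) approach coincide.
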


The decomposition of $\C_x$ in Lemma \ref{le3} above is the same as that given in Theorem 2 of \cite{LB}, but slightly different from that given in Prop 2 in \cite{BL1}. In \cite{BL1} $\Q_x$ is the set of the first conjugate loci of $x$ in $\C_x$, while here $\Q_x$ is the union of the set of the first conjugate loci of $x$ in $\C_x$ with the set of non-conjugate points in $\C_x$ which have more than two minimal geodesics to $x$. Furthermore, the proof of Theorem 2 in Le and Barden \cite{LB} made it clear that the set $\Q_x$, which was called $E$ there, has co-dimension at least two, although the Theorem itself only stated that it has Hausdorff $(m-1)$-measure zero as needed for that paper. In particular, that the set of the first conjugate loci of $x$ has co-dimension at least two was proved in Proposition 1 of Barden and Le \cite{BL1}.  

The next two lemmas show that, although $\rho_x$ is not differentiable at $\mathcal C_x$, it is relatively well behaved in a neighbourhood of $\mathcal H_x$.

\begin{lemma}
Let $\mathcal H_x$ be given as in Lemma $\ref{le3}$. For each $y\in\H_x$, there is a neighbourhood $\V_y$ of $y$ in $\M$ on which there are two unique smooth functions $\phi_{1y}(\,\cdot\mid x)$ and $\phi_{2y}(\,\cdot\mid x)$ such that for any $y'\in\V_y$,
\[\rho_x(y')=\min\{\phi_{1y}(y'\mid x),\phi_{2y}(y'\mid x)\},\]
where $\phi_{1y}(y'\mid x)=\phi_{2y}(y'\mid x)$ if and only if $y'\in\V_y\bigcap\H_x$. 
\label{le4}
\end{lemma}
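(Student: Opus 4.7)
The plan is to use the inverse function theorem at the two non-conjugate preimages of $y$ under $\exp_x$ to build $\phi_{1y}$ and $\phi_{2y}$ as local inverses measuring lengths, and then to argue by a compactness/contradiction that these two branches exhaust the distance-realising geodesics from $x$ to points near $y$. Since $y\in\H_x=\C_x\setminus\Q_x$ and $\Q_x$ contains the first conjugate locus of $x$, the point $y$ is not conjugate to $x$ along any minimal geodesic. By Lemma \ref{le3} there are exactly two minimal geodesics $\gamma_1,\gamma_2$ from $x$ to $y$; denote their initial velocities $v_1,v_2\in\T_x(\M)$, so $\|v_1\|=\|v_2\|=\rho_x(y)$ and $\exp_x(v_i)=y$. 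Non-conjugacy yields that $d(\exp_x)_{v_i}$ is an isomorphism, so the inverse function theorem furnishes disjoint open neighbourhoods $U_1,U_2$ of $v_1,v_2$ in $\T_x(\M)$ (with $0\notin U_1\cup U_2$) and open neighbourhoods $W_i$ of $y$ such that $\exp_x\colon U_i\to W_i$ is a diffeomorphism. Choosing $\V_y\subseteq W_1\cap W_2$ small enough, set
\[
\phi_{iy}(y'\mid x)=\bigl\|(\exp_x|_{U_i})^{-1}(y')\bigr\|,\qquad i=1,2,
\]
which is smooth on $\V_y$ and equals the length of the (locally unique) geodesic from $x$ to $y'$ obtained by continuously deforming $\gamma_i$. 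Uniqueness of this pair, up to labelling, is immediate: any smooth function expressing $\rho_x(y')$ locally as the length of a smooth family of geodesics from $x$ has a well-defined initial velocity at $y$, and the exact-count statement in Lemma \ref{le3} forces that velocity to be either $v_1$ or $v_2$.

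To establish $\rho_x(y')=\min\{\phi_{1y}(y'\mid x),\phi_{2y}(y'\mid x)\}$ on $\V_y$, the inequality $\rho_x(y')\le\min\{\phi_{1y},\phi_{2y}\}$ is immediate since both $\phi_{iy}$ are lengths of geodesics from $x$ to $y'$. For the reverse, we argue by contradiction: if the identity failed on every neighbourhood of $y$, there would be a sequence $y'_n\to y$ admitting initial velocities $w_n\in\T_x(\M)$ of minimising geodesics from $x$ to $y'_n$ with $w_n\notin U_1\cup U_2$ and $\|w_n\|=\rho_x(y'_n)\to\rho_x(y)$. Passing to a subsequence using compactness of a sufficiently large closed ball in $\T_x(\M)$, we extract $w_n\to w_\infty$ with $\|w_\infty\|=\rho_x(y)$ and $\exp_x(w_\infty)=y$; since $U_1,U_2$ are open, $w_\infty\notin U_1\cup U_2$, so $w_\infty\ne v_1,v_2$, contradicting the hypothesis of exactly two minimal geodesics from $x$ to $y$.

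For the characterisation of equality, if $y'\in\V_y\cap\H_x$ then by Lemma \ref{le3} $y'$ has exactly two minimal geodesics from $x$, and since the two branches $\phi_{1y}(y'\mid x),\phi_{2y}(y'\mid x)$ correspond to deformations of $\gamma_1,\gamma_2$ and each achieves the minimum by the previous step, we obtain $\phi_{1y}(y'\mid x)=\phi_{2y}(y'\mid x)=\rho_x(y')$. Conversely, if $\phi_{1y}(y'\mid x)=\phi_{2y}(y'\mid x)$ then both branches give minimal geodesics, so $y'\in\C_x$; since the first conjugate locus is closed and $y\notin\Q_x$, upon further shrinking $\V_y$ we ensure $y'\notin\Q_x$, whence $y'\in\H_x$. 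The main obstacle throughout is the compactness argument excluding a third competing minimal geodesic in the minimum identity; it relies crucially both on the exact-count hypothesis in the definition of $\H_x$ and on the non-conjugacy of $v_1,v_2$, without which the inverse function theorem would fail to supply the smooth local inverses of $\exp_x$ that define the $\phi_{iy}$.
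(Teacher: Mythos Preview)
Your proof is correct and follows essentially the same approach as the paper, which does not give an independent proof but simply cites Barden and Le \cite{BL1} and records the construction: disjoint neighbourhoods $\U_{1y},\U_{2y}\subset\T_x(\M)$ with $\V_y=\exp_x(\U_{iy})$ and $\phi_{iy}(y'\mid x)=\|(\exp_x|_{\U_{iy}})^{-1}(y')\|$. Your inverse-function-theorem construction reproduces this exactly, and you supply the compactness/limit argument and the equality characterisation that the paper leaves to the cited reference. One small point of presentation: when you shrink $\V_y$ to force $y'\notin\Q_x$, recall that in this paper $\Q_x$ is the union of the first conjugate locus with the set of cut points admitting more than two minimal geodesics; closedness of the conjugate locus handles the first part, and the second part is already excluded by your own compactness argument (which shows every minimising initial velocity lies in $U_1\cup U_2$), so you may wish to make that explicit.
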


The neighbourhood $\V_y$ and the two functions $\phi_{iy}(\,\cdot\mid x)$ in the above Lemma were constructed in the proof of Proposition 1 in Barden and Le \cite{BL1} as follows. There are two \textit{disjoint} neighbourhoods $\U_{1y}$ and $\U_{2y}$ in $\T_x(\M)$ such that, for each $i$, $\V_y=\exp_x(\U_{iy})$. Then, $\phi_{iy}(y'\mid x)=\|(\exp_x^{-1}\mid_{\U_{iy}}(y')\|$ for $y'\in\V_y$.

The next result is an immediate consequence of this construction. 

\begin{lemma}
Let $\mathcal H_x$ be given as in Lemma $\ref{le3}$, and let $\mathcal V_y$, $\mathcal U_{jy}$ and $\phi_{jy}$ be given as in Lemma $\ref{le4}$ and in the following construction. If, for $y'\in\V_y\bigcap\H_x$, $\gamma_j$ is the minimal geodesic with $\gamma_j(0)=x$, $\gamma_j(1)=y'$ and $\dot\gamma_j(0)\in\U_{jy}$, then 
\[\nabla\phi_{jy}(y'\mid x)=\dot\gamma_j(1)/\|\dot\gamma_j(1)\|,\]
that is, $\nabla\phi_{jy}(y'\mid x)$ is the unit tangent vector to $\gamma_j$ at $y'$.
\label{le5}
\end{lemma}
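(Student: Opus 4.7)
The plan is to interpret $\phi_{jy}(\,\cdot\mid x)$ as a smooth \emph{branch} of the distance function $\rho_x$ on $\V_y$ and then invoke the Gauss lemma. By the construction recalled immediately after Lemma~\ref{le4}, the restriction $\exp_x\colon\U_{jy}\to\V_y$ is a diffeomorphism, and
\[
   \phi_{jy}(y'\mid x)\;=\;\bigl\|\exp_x^{-1}|_{\U_{jy}}(y')\bigr\|,
\]
so $\phi_{jy}(\,\cdot\mid x)$ is smooth on $\V_y$ and therefore has a well-defined gradient there. In particular, unlike $\rho_x$ itself, which fails to be differentiable at points of $\H_x$, each individual branch $\phi_{jy}$ remains smooth across $\V_y\cap\H_x$.

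Next, I would apply the Gauss lemma in $\T_x(\M)$. For every $v\in\U_{jy}$ the curve $t\mapsto\exp_x(tv)$ is a radial geodesic issuing from $x$, and the Gauss lemma says that these radial geodesics meet the image under $\exp_x$ of the Euclidean spheres $\{v'\in\U_{jy}:\|v'\|=r\}$ orthogonally. Since those images are precisely the level sets $\{\phi_{jy}(\,\cdot\mid x)=r\}$ inside $\V_y$, the gradient $\nabla\phi_{jy}(y'\mid x)$ must be proportional to the terminal velocity $\dot\gamma_j(1)$ of the geodesic $\gamma_j$ with $\gamma_j(0)=x$, $\gamma_j(1)=y'$ and $\dot\gamma_j(0)\in\U_{jy}$, and the proportionality constant is positive because $\phi_{jy}$ increases outward along $\gamma_j$.

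Finally, to pin down the constant, I would differentiate along $\gamma_j$ itself. Since $\gamma_j$ has constant speed with $\|\dot\gamma_j(t)\|\equiv\|\dot\gamma_j(0)\|=\phi_{jy}(y'\mid x)$, one has $\phi_{jy}(\gamma_j(t)\mid x)=t\,\|\dot\gamma_j(1)\|$, so
\[
   \bigl\langle\nabla\phi_{jy}(y'\mid x),\,\dot\gamma_j(1)\bigr\rangle
   \;=\;\tfrac{d}{dt}\phi_{jy}(\gamma_j(t)\mid x)\Big|_{t=1}\;=\;\|\dot\gamma_j(1)\|.
\]
Combining with the proportionality yields $\nabla\phi_{jy}(y'\mid x)=\dot\gamma_j(1)/\|\dot\gamma_j(1)\|$, which is the claim (and it holds throughout $\V_y$, with the case $y'\in\V_y\cap\H_x$ being the one of interest since there both $j=1$ and $j=2$ give \emph{minimal} geodesics). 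There is no real obstacle: once Lemmas~\ref{le3} and~\ref{le4} have separated the two minimal geodesics into disjoint sheets $\U_{jy}$, the assertion reduces entirely to the classical Gauss lemma within a single-valued branch.
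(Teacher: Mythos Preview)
Your argument is correct and is precisely the unpacking of what the paper treats as immediate: the paper does not give a proof but simply states that Lemma~\ref{le5} ``is an immediate consequence of this construction,'' referring to the description of $\phi_{jy}(y'\mid x)=\|\exp_x^{-1}|_{\U_{jy}}(y')\|$ after Lemma~\ref{le4}. Your use of the Gauss lemma to identify the gradient of a smooth branch of the radial distance with the unit tangent to the corresponding radial geodesic is exactly the standard fact the authors are implicitly invoking.
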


The following result follows from the uniqueness of the pair of functions $\phi_{jy}$, $j=1,2$,  stated in Lemma \ref{le4}.  

\begin{corollary}
Let $\mathcal H_x$ be given as in Lemma $\ref{le3}$, and let $\mathcal V_y$ and $\phi_{jy}$ be given as in Lemma $\ref{le4}$. For each $y'\in\V_y\bigcap\H_x$, the unordered pair of the functions $\{\phi_{1y'}(\cdot\mid x),\phi_{2y'}(\cdot\mid x)\}$ coincides with the pair $\{\phi_{1y}(\cdot\mid x),\phi_{2y}(\cdot\mid x)\}$ on $\V_y\bigcap\V_{y'}$. Thus, the difference $\phi_{1z}(\cdot\mid x)-\phi_{2z}(\cdot\mid x)$ is, up to sign, independent of $z\in(\V_y\cup\V_{y'})\cap\H_x$ and so, making a continuous choice of sign, this difference is a well-defined function $\chi_i(\cdot\mid x)$ on a neighbourhood (in $\M$) of each connected component $\H_i(x)$ of $\H_x$.
\label{co1}
\end{corollary}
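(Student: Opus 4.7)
The plan is to exploit the uniqueness of the pair $\phi_{jy}$ asserted in Lemma~\ref{le4}, via its explicit construction recalled immediately after that lemma, first to identify the unordered pairs on $\V_y \cap \V_{y'}$, and then to build $\chi_i(\,\cdot\mid x)$ by a continuous sign-gluing argument on a neighborhood of the connected component $\H_i(x)$ of $\H_x$.

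For the first assertion, recall that $\phi_{iy}(\,\cdot\mid x) = \|\exp_x^{-1}\mid_{\U_{iy}}(\,\cdot\,)\|$, where $\U_{1y}$ and $\U_{2y}$ are the disjoint open neighborhoods in $\T_x(\M)$ of the two initial velocities of minimal geodesics from $x$ to $y$. Take $y' \in \V_y \cap \H_x$. The equality $\phi_{1y}(y'\mid x) = \phi_{2y}(y'\mid x) = \rho_x(y')$ forces each $\exp_x^{-1}\mid_{\U_{iy}}(y') \in \U_{iy}$ to be the initial velocity of a minimal geodesic from $x$ to $y'$; disjointness of $\U_{1y}$ and $\U_{2y}$ then makes these two points precisely the two initial velocities of minimal geodesics joining $x$ to $y'$. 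I may therefore construct $\phi_{iy'}$ with data $\U_{iy'} \subset \U_{iy}$ (shrinking $\V_{y'}$ accordingly), which gives $\phi_{iy'}(\,\cdot\mid x) = \phi_{iy}(\,\cdot\mid x)$ on an open subset of $\V_y \cap \V_{y'}$ containing $y'$. The uniqueness clause in Lemma~\ref{le4} (applied on an open subregion) then extends this equality to all of $\V_y \cap \V_{y'}$, establishing the first part.

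For the second assertion, the coincidence of unordered pairs over each overlap $\V_y \cap \V_{y'}$ shows that the difference $\phi_{1z}(\,\cdot\mid x) - \phi_{2z}(\,\cdot\mid x)$ is determined on $\V_z$ up to an overall sign, and adjacent differences agree on overlaps up to sign. I would build $\chi_i(\,\cdot\mid x)$ by covering $\H_i(x)$ with such neighborhoods, fixing a sign at a reference patch, and propagating it across the cover so that the local differences match on overlaps. Lemma~\ref{le5} yields that on each $\V_y \cap \H_x$ the gradient of $\phi_{1y} - \phi_{2y}$ equals the difference of the two distinct unit tangent vectors at that point of the two minimizing geodesics and is therefore nonzero; this rigidifies the sign locally and ensures continuity of the resulting $\chi_i(\,\cdot\mid x)$.

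The principal subtlety is path-independence of the sign propagation along $\H_i(x)$, so that $\chi_i(\,\cdot\mid x)$ is single-valued on a full neighborhood of the connected component. By non-conjugacy of the two geodesics on $\H_x$ (Lemma~\ref{le3}), a continuous local section $y \mapsto \{v_1(y), v_2(y)\}$ of unordered pairs of initial velocities in $\T_x(\M)$ is obtained via the implicit function theorem, which can be lifted to an ordered pair on a sufficiently small (simply connected) tubular neighborhood of $\H_i(x)$; connectedness of $\H_i(x)$ then propagates this lift, and $\chi_i(\,\cdot\mid x)$ is thereby well defined on the corresponding neighborhood in $\M$.
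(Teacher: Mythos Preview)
Your argument for the first assertion---that the unordered pairs coincide on $\V_y\cap\V_{y'}$---is correct and is precisely the route the paper indicates: it invokes only the uniqueness clause of Lemma~\ref{le4}, and your use of the explicit construction $\phi_{iy}=\|\exp_x^{-1}\mid_{\U_{iy}}(\cdot)\|$ to match the branches at $y'$ is the right way to make that uniqueness bite.

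For the second assertion there is a genuine gap in your resolution of the monodromy issue. You claim the unordered pair of initial velocities can be lifted to an ordered pair on a ``sufficiently small (simply connected) tubular neighborhood'' of $\H_i(x)$, but a tubular neighborhood of a connected hypersurface need not be simply connected, and the lift need not exist. Concretely, take $\M=\mathbb{RP}^2$: here $\H_x=\C_x\cong\mathbb{RP}^1$ is a circle, the two initial velocities at the cut point with polar angle $\theta$ are $\pm(\cos\theta,\sin\theta)$, and as $\theta$ runs from $0$ to $\pi$ (one full loop on $\mathbb{RP}^1$) the two branches exchange. Equivalently, the tubular neighborhood of $\H_x$ is a M\"obius band, so no global continuous sign choice for $\phi_1-\phi_2$ exists on a full neighborhood of $\H_i(x)$. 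The corollary's phrasing is therefore a little loose; what is actually needed---and what the paper uses---are the sign-invariant quantities $\kappa(y\mid x)=\|\nabla\chi_i(y\mid x)\|$ and the line field $\n(y\mid x)$ ``up to sign'' (cf.\ Corollary~3(c) and the tensor $J$ in \eqref{eqn2}), all of which are well defined from the \emph{locally} consistent $\chi_i$. So your identification of the subtlety is apt, but the correct fix is not simple connectedness: it is to note that only $\chi_i$ modulo sign (equivalently, $\chi_i^2$ or $|\chi_i|$) is required downstream, and that object is globally well defined by your overlap argument.
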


This, together with the results in Barden and Le \cite{BL1}, implies the following relationship between $\mathcal H_i$ and $\chi_i$.

\begin{corollary}
Let $\mathcal H_i$ and $\chi_i$ be given as in Corollary $\ref{co1}$. For $y\in\H_i(x)$, $\nabla\chi_i(y\mid x)$ is non-zero and normal to $\H_i(x)$ at $y$.
\label{co1a}
\end{corollary}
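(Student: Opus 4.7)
The plan is to combine Lemma \ref{le5} with the observation that $\chi_i(\,\cdot\mid x)$ vanishes identically on $\H_i(x)$. Since $\chi_i$ is defined on a neighbourhood of $\H_i(x)$ as the difference $\phi_{1z}(\,\cdot\mid x)-\phi_{2z}(\,\cdot\mid x)$ (with a continuous choice of sign), and since at every point $z\in\H_x$ the two local branches of the distance function coincide, $\phi_{1z}(z\mid x)=\phi_{2z}(z\mid x)=\rho_x(z)$, the function $\chi_i(\,\cdot\mid x)$ is zero on the whole of $\H_i(x)$.

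Next, I would compute the gradient at a point $y\in\H_i(x)$ directly. By Lemma \ref{le5}, up to the continuous sign choice,
\[
\nabla\chi_i(y\mid x)=\nabla\phi_{1y}(y\mid x)-\nabla\phi_{2y}(y\mid x)=\frac{\dot\gamma_1(1)}{\|\dot\gamma_1(1)\|}-\frac{\dot\gamma_2(1)}{\|\dot\gamma_2(1)\|},
\]
where $\gamma_1$ and $\gamma_2$ are the two minimal geodesics from $x$ to $y$ distinguished by $\dot\gamma_j(0)\in\U_{jy}$. The crux is to show these two unit tangent vectors at $y$ are different. Suppose they were equal. Running each geodesic backwards from $y$ with this common unit velocity, uniqueness of the geodesic flow would force $\gamma_1=\gamma_2$, hence $\dot\gamma_1(0)=\dot\gamma_2(0)$, contradicting the disjointness of $\U_{1y}$ and $\U_{2y}$ established in the construction following Lemma \ref{le4}. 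Therefore $\nabla\chi_i(y\mid x)\neq 0$.

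Finally, the normality follows from the standard fact that the gradient of a smooth function is orthogonal to its level sets at regular points. Since $\chi_i(\,\cdot\mid x)$ is smooth in a neighbourhood of $y$ (being the difference of two smooth functions $\phi_{jy}$), vanishes on $\H_i(x)$, and has non-vanishing gradient at $y$, the implicit function theorem identifies a neighbourhood of $y$ in $\H_i(x)$ with the zero level set of $\chi_i(\,\cdot\mid x)$. Consequently $\nabla\chi_i(y\mid x)$ is normal to $\H_i(x)$ at $y$, as required.

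The main (mild) obstacle is the uniqueness argument that rules out $\dot\gamma_1(1)/\|\dot\gamma_1(1)\|=\dot\gamma_2(1)/\|\dot\gamma_2(1)\|$; once that is in place, the rest of the proof is a direct consequence of Lemma \ref{le5} and Corollary \ref{co1}.
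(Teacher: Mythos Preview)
Your argument is correct. The paper does not spell out a proof of this corollary; it simply records that the statement follows from Corollary~\ref{co1} together with results in Barden and Le \cite{BL1}. Your proof is a self-contained unpacking of exactly that: Lemma~\ref{le4} gives $\chi_i\equiv 0$ on $\H_i(x)$, Lemma~\ref{le5} identifies $\nabla\chi_i(y\mid x)$ as the difference of the two incoming unit tangents, uniqueness of the geodesic flow (together with the disjointness of $\U_{1y}$ and $\U_{2y}$) forces that difference to be non-zero, and since $\H_i(x)$ is already known from Lemma~\ref{le3} to be a hypersurface, the gradient of a smooth function vanishing on it is automatically normal to it. One small remark: you do not actually need the implicit function theorem to conclude normality, since $\H_i(x)$ is a priori a hypersurface; differentiating $\chi_i$ along any curve in $\H_i(x)$ already gives $\langle\nabla\chi_i(y\mid x),v\rangle=0$ for every $v\in\T_y(\H_i(x))$.
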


With the above understanding of $\mathcal C_x$ and $\rho_x$ nearby, we reach the following main ingredients for our definition of the second term of $\Psi_\mu(x_0)$.

\begin{corollary}
Let $\mathcal H_x$ be given as in Lemma $\ref{le3}$, and let $\mathcal H_i$ and $\chi_i$ be given as in Corollary $\ref{co1}$. Then,
\begin{enumerate}
\item[$(a)$] the set $\H_x$ can be expressed as the countable union of disjoint $\H_i(x)$;
\item[$(b)$] the function
\begin{eqnarray}
\kappa(y\mid x)=\left\|\nabla\chi_i(y\mid x)\right\|,\qquad\hbox{ if }y\in\H_i(x),
\label{eqn1b}
\end{eqnarray}
is well-defined on $\H_x$;  
\item[$(c)$] the unit normal vector field given by
\begin{eqnarray}
\n(y\mid x)=\frac{\nabla\chi_i(y\mid x)}{\kappa(y\mid x)}\in\T_y(\M),\qquad\hbox{ if }y\in\H_i(x),
\label{eqn1c}
\end{eqnarray}
is well-defined up to sign on $\H_x$. 
\end{enumerate}
\end{corollary}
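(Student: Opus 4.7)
The plan is to deduce all three items essentially by bookkeeping from Lemma~\ref{le3}, Corollary~\ref{co1}, and Corollary~\ref{co1a}, so the proof is short. For part (a), Lemma~\ref{le3} already expresses $\H_x$ as a countable disjoint union of hyper-surfaces; since $\M$ is second-countable, each such hyper-surface has at most countably many connected components, and the total collection of these components indexed by $i$ is still countable and pairwise disjoint. This is precisely the family $\{\H_i(x)\}$ appearing in Corollary~\ref{co1}, and its union is $\H_x$ by construction.

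For part (b), Corollary~\ref{co1} produces, on a neighbourhood of each $\H_i(x)$, a smooth function $\chi_i(\,\cdot\mid x)$ that is uniquely determined up to a single global sign. Hence $\nabla\chi_i(y\mid x)$ is defined up to sign at every $y\in\H_i(x)$, and its norm is insensitive to that sign, so $\kappa(y\mid x)=\|\nabla\chi_i(y\mid x)\|$ is an unambiguous non-negative number. Because the $\H_i(x)$ are disjoint by (a), there is no ambiguity as to which index $i$ to use at a given $y\in\H_x$; and by Corollary~\ref{co1a}, $\nabla\chi_i(y\mid x)\neq 0$, so in fact $\kappa(y\mid x)>0$ throughout $\H_x$.

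For part (c), positivity of $\kappa$ allows us to set $\n(y\mid x)=\nabla\chi_i(y\mid x)/\kappa(y\mid x)$, which is a unit vector in $\T_y(\M)$. Corollary~\ref{co1a} says $\nabla\chi_i$ is normal to $\H_i(x)$ at $y$, so $\n(\,\cdot\mid x)$ is a unit normal vector field along each component. The only residual freedom is the overall sign of $\chi_i$ on $\H_i(x)$, which flips $\n(y\mid x)$ to $-\n(y\mid x)$ without affecting unit length or normality; this is the ``up to sign'' ambiguity stated in (c). I expect the only step requiring any real argument to be the countability in (a), and even that reduces to the standard fact that embedded submanifolds of a second-countable manifold have at most countably many connected components.
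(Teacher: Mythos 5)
Your argument is correct and, as far as I can tell, coincides with the (unstated) reasoning the paper intends: the corollary is presented without proof, being an immediate bookkeeping consequence of Lemma~\ref{le3}, Corollary~\ref{co1}, and Corollary~\ref{co1a}. You supply exactly the missing details — second-countability of $\M$ gives countably many components for (a); the disjointness of the $\H_i(x)$ and the sign-invariance of $\|\nabla\chi_i\|$ give (b); and the non-vanishing and normality of $\nabla\chi_i$ from Corollary~\ref{co1a}, together with the residual global sign ambiguity of $\chi_i$ on each connected component, give (c).
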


Note that, for $y\in\H_i(x)$,
\[\rho(x,y)\,\kappa(y\mid x)=\frac{1}{2}\left\|\nabla\left(\phi_{1y}(y\mid x)^2-\phi_{2y}(y\mid x)^2\right)\right\|;\]
and that, for $y\in\H_i(x)$ and $y'\in\V_y\bigcap\H_i(x)$,
\[\nabla\phi_{1y}(y'\mid x)=\lim_{z\rightarrow y',\langle\exp^{-1}_{y'}(z),\nabla\phi_i(y'\mid x)\rangle<0}\nabla\phi_{1y}(z\mid x).\]

\vskip 6pt
Now, for $y\in\H_x$, define $\d y^{\perp_x}$ to be the 1-form, unique up to sign, given by $\d y^{\perp_x}(U(y))=\langle\n(y\mid x),U(y)\rangle$ for any tangent vector $U(y)$ at $y$. 
Write $J(y\,|\, x)$ for the well-defined $(0,2)$-tensor at $y$ on $\H_x$ given by
\begin{eqnarray}
J(y\mid x)=\rho_x(y)\,\kappa(y\mid x)\,{\rm d}y^{\perp_x}\otimes{\rm d}y^{\perp_x}.
\label{eqn2}
\end{eqnarray}
That is, for any $y\in\H_x$ and any $U(y),V(y)\in\mathcal{T}_y(\M)$,
\[(J(y\mid x))(U(y),V(y))=\rho_x(y)\,\kappa(y\mid x)\,\langle\n(y\mid x),U(y)\rangle\,\langle\n(y\mid x),V(y)\rangle.\]

Write $\alpha(t)$ for the unit speed geodesic orthogonal to $\H_x$ at $y$ and $\tau_{x|y}(t)$ for the distance from $x$ to $\H_{\alpha(t)}$ along the geodesic orthogonal to $\H_y$. Then
\begin{eqnarray}
\d x^{\perp_y}=\tau'_{x|y}(0)\,\d y^{\perp_x}.
\label{eqn5b}
\end{eqnarray}
That is, $\tau'_{x|y}(0)$ represents the rate of change of $x$ orthogonal to $\mathcal H_y$ as $y$ moves orthogonally to $\mathcal H_x$.
In terms of $J(x_0\mid y)$, $\tau'_{x|y}(0)$ and $\psi$, the Radon-Nikodym derivative of $\mu$ with respect to the volume measure in a neighbourhood of $\mathcal C_{x_0}$, we denote by $J_\mu(x_0)$ the $(1,1)$-tensor defined by
\begin{eqnarray}
\begin{array}{rcl}
&&\hspace*{-5mm}\left(J_\mu(x_0)\right)(V(x_0))\\
&=&\displaystyle\int_{\H_{x_0}}\!\!\!\!\!\rho_y(x_0)\,\kappa(x_0\!\mid\! y)\,
\langle\n(x_0\!\mid\! y),V(x_0)\rangle\,\n(x_0\!\mid\! y)\,\tau'_{y|x_0}(0)\, \psi(y)\,\d\v_{\H_{x_0}}(y),
\end{array}
\label{eqn7}
\end{eqnarray}
where $\d\v_{\H_x}$ denotes the co-dimension one surface measure on $\H_x$.

\vskip 6pt
Finally, we can express the $(1,1)$-tensor $\Psi_\mu(x_0)$ appearing in Theorem 1 and Theorem 2 explicitly.  
\begin{lemma}
	In the notation introduced above, 
\begin{eqnarray}
\Psi_\mu(x_0)=-\int_{\M}H(x_0\mid\xi)\,\d\mu(\xi)-J_\mu(x_0).
\label{eqn5c}
\end{eqnarray}
\end{lemma}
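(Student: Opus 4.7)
The statement identifies the tensor $\Psi_\mu(x_0)$ from the linearisation in Theorem~1. Since $G_\mu(x_0)=0$, applying Theorem~1 along the radial geodesic $x(t)=\exp_{x_0}(tV)$ and dividing by $t$ gives
\[D_V G_\mu(x_0)=-\Psi_\mu(x_0)(V),\]
where $D$ denotes the covariant derivative (the parallel transport in the definition of $D_V G_\mu$ matches the $\Pi_{x,x_0}$ in Theorem~1). Hence the lemma reduces to showing
\[D_V G_\mu(x_0)=\int_\M H(x_0\mid\xi)(V)\,\d\mu(\xi)+J_\mu(x_0)(V).\]
The plan is to differentiate the integral defining $G_\mu$ and isolate two contributions: a ``bulk'' derivative of $\exp_x^{-1}(\xi)$ away from $\C_x$, and a ``boundary'' contribution from the motion of the indicator $1_{\xi\notin\C_x}$ as $x$ varies.

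For the bulk term, I would split the integral along $\partial\mathcal{B}_\delta(x_0)$. On $\M\setminus\mathcal{B}_\delta(x_0)$, metric continuity (Proposition~2) gives that for $x$ close enough to $x_0$ the indicator is identically one, and one may differentiate under the integral sign using the definition of $H$ in~(\ref{eqn3.1a}): $D_V\exp_x^{-1}(\xi)|_{x_0}=H(x_0\mid\xi)(V)$. Sending $\delta\downarrow 0$, and using assumption~(v) for integrability of $H(x_0\mid\cdot)$, the fact that $\mu(\C_{x_0})=0$ (which is a consequence of (iii) together with Lemma~3), and the $O(\delta)$ volume bound~(iv) to control the extra piece near $\C_{x_0}$, yields the first summand $\int_\M H(x_0\mid\xi)(V)\,\d\mu(\xi)$.

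For the boundary term I would work inside $\mathcal{B}_\delta(x_0)$. Lemma~3 and Corollary~3(a) decompose $\C_{x_0}=\Q_{x_0}\sqcup\bigsqcup_i\H_i(x_0)$ with $\Q_{x_0}$ of codimension at least two, so assumptions (iii)--(iv) make the $\Q_{x_0}$ part negligible in the limit. On each hypersurface component $\H_i(x_0)$, Lemma~4 provides two smooth branches of $\exp_x^{-1}$ whose roles swap as $\xi$ crosses $\H_x$ normally; Lemma~5, together with Corollary~1, identifies the jump in $\exp_x^{-1}(\xi)$ at $y\in\H_i(x_0)$, after parallel transport to $\T_{x_0}(\M)$, as $\rho_y(x_0)\,\kappa(x_0\mid y)\,\n(x_0\mid y)$. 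As $x=\exp_{x_0}(tV)$ moves, (\ref{eqn5b}) combined with the symmetry $y\in\H_x\Leftrightarrow x\in\H_y$ from Lemma~3 yields that the hypersurface $\H_x$ sweeps past $y$ in the normal direction $\n(y\mid x_0)$ at signed rate $\tau'_{y\mid x_0}(0)\,\langle\n(x_0\mid y),V\rangle$. Using the tubular parametrisation of $\mathcal{B}_\delta(x_0)$ provided by the distance to $\H_{x_0}$, the continuity of $\psi$ from (iii), and the codimension-one surface measure $\d\v_{\H_{x_0}}$, the net contribution from the moving indicator equals exactly $J_\mu(x_0)(V)$ as defined in~(\ref{eqn7}).

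The main obstacle is the careful bookkeeping of signs and orientations in this boundary calculation: one must consistently identify which branch of $\exp_x^{-1}$ is the minimising one on each side of $\H_i(x_0)$, combine this with the direction in which $\H_x$ sweeps so that the product $\langle\n(x_0\mid y),V\rangle\,\n(x_0\mid y)$ picks out the correct orthogonal component of $V$, and justify the interchange of the limits $t\downarrow 0$ and $\delta\downarrow 0$. The latter is technical but follows from the coarea formula applied to the tubular neighbourhood of each $\H_i(x_0)$, the $O(\delta)$ bound (iv), and the uniform control of $\psi$ on $\mathcal{B}_\delta(x_0)$ provided by (iii).
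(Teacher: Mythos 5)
Your proposal is correct and follows essentially the same route as the paper: formula~(\ref{eqn5c}) is in fact the paper's \emph{definition} of $\Psi_\mu(x_0)$ (Theorem~1 refers forward to it), and the derivation you sketch (the bulk term from differentiating $\exp_x^{-1}(\xi)$ off $\C_{x_0}$, plus the boundary term obtained by slicing the region swept out as $\H_x$ passes across $\H_{x_0}$, with Jacobian factor $\langle\n(x_0\mid y),\exp_{x_0}^{-1}(x)\rangle\,\tau'_{y\mid x_0}(0)$) is precisely the computation carried out in the paper's proof of Theorem~1 via Proposition~\ref{prop2} and the volume element approximation for $\N_{x_0,x}$. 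Your ``moving indicator'' phrasing and the paper's packaging of the jump into Proposition~\ref{prop2} together with the sets $\N^*_{x_0,x}$ and $\N_{x_0,x}$ are the same argument presented differently.
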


\subsection{Three examples of $J_\mu$}

In the case of symmetric spaces, $\tau'_{y|x}(0)\equiv1$ and so the expression for $J_\mu(x)$ defined by \eqref{eqn7} can be simplified. We now calculate $J_\mu(x)$  for special symmetric spaces with appropriate `coordinate systems'.  Moreover, we show that condition (iv) in Theorem 1 is  satisfied in each of the three examples.  i.e. we show that $\textrm{vol}_{\M}(\mathcal{A}_\delta(x))=O(\delta)$ as $\delta \downarrow 0$, where $\mathcal{A}_\delta(x)$ is defined in (\ref{curlyAdelta}); and we show that condition (v) in Theorem 1 is also satisfied in the three examples.

\vskip 6pt
$(a)$ $\M=S^1$: $\H_x=\C_x$ contains only the antipodal point $y$ of $x$. Thus, $\rho(x,y)=\pi$; the initial tangent vectors of the two geodesics from $x$ to $y$ have the opposite direction so that $\kappa(x\mid y)=2$; and we may take $\n(x\mid y)=1$. Hence, if we take the standard coordinate in the subset $(-\pi,\pi]$ in its universal cover with $x=0$, then the corresponding $J_\mu$ is $J_\mu(0)=-2\pi\,\psi(\pi)$, identical with the extra term in the covariance of the central limit theorem of Hotz and Huckemann \cite{HH}.

Finally, we check conditions (iv) and (v) of Theorem 1. 
Since $\mathcal{C}_x$ is the antipodal point of $x$, it follows that, in the local coordinates introduced above,  $\mathcal{A}_\delta$ may be written as $\mathcal{A}_\delta(0)=(-\pi, -\pi +\delta) \cup (\pi-\delta, \pi]$, so that $\textrm{vol}_{\M}(\mathcal{A}_\delta(x))=2\delta$ and 
therefore condition (iv) 
of Theorem 1 is 
satisfied. Condition (v) 
follows because the circle is flat and therefore the Hessian $H(x_0\vert \xi)=1$ if $\xi$ is not the antipodal point of $x_0$. 

For higher dimensional spheres $S^d$, $d > 1$, we have $\textrm{vol}_{\M}(\mathcal A_\delta(x)) = O(\delta^d)$ but $\H_x$ is empty since the cut-locus is of co-dimension $d > 1$, so $J_\mu(0)$ vanishes. For $d > 2$ this has already been observed by Bhattacharya \& Lin \cite{BhLi} but the CLT for $S^2$ given a non-vanishing density at the cut locus appears to be new.

\vskip 6pt
$(b)$ $\M=S^1\times S^1$ (the standard torus): We take the standard coordinate system in the subset $(-\pi,\pi]\times(-\pi,\pi]$ in its universal covering space with $x=(0,0)$. Then $\C_x=\H_x\cup\{(\pi,\pi)\}$ where $\H_x$ is the union of two disjoint sets $\H_1(x)$ and $\H_2(x)$ where $\H_1(x)=(-\pi,\pi)\times\{\pi\}$ and $\H_2(x)=\{\pi\}\times(-\pi,\pi)$. Under this coordinate system, $U=\frac{\partial}{\partial x_1}$ and $V=\frac{\partial}{\partial x_2}$ form an orthonormal basis of $\T_x(\M)$, and, for any $y=(y_1,y_2)\in\M$, $\rho(x,y)^2=y_1^2+y_2^2$. Also, up to sign, for $y\in\H_1(x)$, $\n(x\mid y)=V$ and, for $y\in\H_2(x)$, $\n(x\mid y)=U$. For $y\in\H_1(x)$, $\rho(x,y)\kappa(x\mid y)=2|y_1|=2\pi$ and, similarly, $\rho(x,y)\kappa(x\mid y)=2|y_2|=2\pi$ for $y\in\H_2(x)$. Thus,
\begin{eqnarray*}
&&\hspace*{-5mm}\int_{\H_x}\rho(x,y)\,\kappa(x\mid y)\,\langle\n(x\mid y),U\rangle\,\langle\n(x\mid y),U\rangle\,\psi(y)\,\d\v_{\H_x}(y)\\
&=&\int_{\H_2(x)}\rho(x,y)\,\kappa(x\mid y)\,\langle\n(x\mid y),U\rangle\,\langle\n(x\mid y),U\rangle\,\psi(y)\,\d\v_{\H_2(x)}(y)\\
&=&2\pi\int_{-\pi}^\pi\psi(y_1,\pi)\,\d y_1;\\
&&\hspace*{-5mm}\int_{\H_x}\rho(x,y)\,\kappa(x\mid y)\,\langle\n(x\mid y),V\rangle\,\langle\n(x\mid y),V\rangle\,\psi(y)\,\d\v_{\H_x}(y)\\
&=&\int_{\H_1(x)}\rho(x,y)\,\kappa(x\mid y)\,\langle\n(x\mid y),V\rangle\,\langle\n(x\mid y),V\rangle\,\psi(y)\,\d\v_{\H_1(x)}(y)\\
&=&2\pi\int_{-\pi}^\pi\psi(\pi,y_2)\,\d y_2;\\
&&\hspace*{-5mm}\int_{\H_x}\rho(x,y)\,\kappa(x\mid y)\,\langle\n(x\mid y),U\rangle\,\langle\n(x\mid y),V\rangle\,\psi(y)\,\d\v_{\H_x}(y)\\
&=&0.
\end{eqnarray*}
Hence, in this case, under the chosen `coordinate system', the corresponding $J_\mu$ is
\[J_\mu(0,0)=-2\pi\begin{pmatrix}\displaystyle\int_{-\pi}^\pi\psi(y_1,\pi)\,\d y_1&0\\0&\displaystyle\int_{-\pi}^\pi\psi(\pi,y_2)\,\d y_2\end{pmatrix}.\]

Finally, we note that 
\begin{equation*}
\mathcal{A}_\delta(x) = \left \{S^1 \times \left  \{ (- \pi+\delta, -\pi) \cup (\pi-\delta, \pi]\right \} \right\} \cup \left \{ \left \{ (- \pi+\delta, -\pi) \cup (\pi-\delta, \pi] \right \} \times S^1 \right \}\!.
\end{equation*}
In this case,  $\textrm{vol}_{\M}(\mathcal{A}_\delta(x))$
is seen to be bounded by $8 \pi \delta$. It follows that   condition (iv)  of Theorem 1 is satisfied.  Condition (v) of Theorem 1 follows because the torus is flat and hence $H(x_0\vert \xi)=I$, the identity, for $\xi\not\in\C_{x_0}$.

For $d$-dimensional tori with $d > 2$, $\H_x$ is given by the union of $(d-1)$-dimensional tori, and the conditions remain satisfied with $J_\mu$ not vanishing in general.

\vskip 6pt
$(c)$ $\M=\mathbb{RP}^2$ (two-dimensional real projective space): $\Q_x=\emptyset$ so that $\H_x=\C_x$; and for any $y\in \C_x$, $\rho(x,y)=\pi/2$ where the initial tangent vectors of the two minimal geodesics from $x$ to $y$ are in opposite directions. Hence, for $y\in \C_x$, $\rho(x,y)\,\kappa(x\mid y)=\pi$. We take the normal coordinates centred at $x$ on $\T_x(\M)$. Then, using the corresponding polar coordinates $(r,\theta)$, for any $y\in\C_x$ one of the initial unit tangent vectors to the two geodesics from $x$ to $y$ has coordinates $(\cos\theta,\sin\theta)$ where $\theta\in[0,\pi)$, which we take as $\n(x\mid y)$. Thus, for $y\in\C_x$,
\[\n(x\mid y)\otimes\n(x\mid y)=\begin{pmatrix}\cos\theta\\ \sin\theta\end{pmatrix}\begin{pmatrix}\cos\theta&\sin\theta\end{pmatrix}=\begin{pmatrix}\cos^2\theta&\sin\theta\cos\theta\\ \sin\theta\cos\theta&\sin^2\theta\end{pmatrix}\]
so that in this case, under this coordinate system, the corresponding $J_\mu$ is
\begin{eqnarray*}
J_\mu(0,0)=-\pi\int_0^\pi\begin{pmatrix}\cos^2\theta&\sin\theta\cos\theta\\ \sin\theta\cos\theta&\sin^2\theta\end{pmatrix}\psi(\pi/2,\theta)\,\d\theta.
\end{eqnarray*}
This expression can be verified by direct computation of the Hessian of $F_\mu$.

Finally, we consider conditions (iv) and (v) of Theorem 1.  We first identify the form of $\mathcal{A}_\delta(x_0)$.  Without loss of generality we take $x_0$ to be $x_0= (0,0,1)^\top$ and represent $\mathbb{RP}^2$ by the hemisphere $\{x =(x_1, x_2, x_3)^\top \in \mathcal{S}^2:\,  x_3 \geq 0\}$.  Then it is easy to see that $\mathcal{A}_\delta$ is given by
\[
\mathcal{A}_\delta = \{ (\sin \theta \cos \phi, \sin \theta \sin \phi, \cos \theta)^\top: \, \theta \in ((\pi/2) - \delta, \pi/2), \phi \in (0,2\pi)\}.
\]
Moreover, the volume of $\mathcal{A}_\delta$ with respect to surface area measure on $S^2$ is $2 \pi \sin \delta$.  It follows easily that condition (iv) of Theorem 1 is satisfied here, too.

Condition (v) requires a bit more work to check in this example.  From Kendall and Le (2011), $H(x\vert y)$ on the sphere $S^2$ is given by the map
\begin{align*}
H(x\mid y):\,\, v\,\mapsto\,&\dfrac{1-\rho(x,y)\cos(\rho(x,y))/\sin(\rho(x,y))}{\rho(x,y)^2}\langle\exp^{-1}_x(y),v\rangle\exp^{-1}_x(y)\\
&+\dfrac{\rho(x,y)\cos(\rho(x,y))}{\sin(\rho(x,y))}v,
\end{align*}
where $\langle \cdot , \cdot \rangle$ is the Riemannian inner product on the tangent space at $x \in \M$.
When restricted to the (open) half sphere centred at $y$, it gives $H(x\mid y)$ on $\mathbb{RP}^2$. For given $x \in \M$ there is a possible singularity at $y=x$. However, the singularity is in fact a removable singularity because, for $x$ close to $y$, $\rho(x,y) \sim \sin (\rho(x,y))$ and $1- \cos(\rho(x,y)) \sim \rho(x,y)^2$.
Then, the boundedness of $H(x\mid y)$ ensures that
\[
\int_{\M} H(x\vert y)\d \mu(y)
\] 
is well-defined.

As is the case with higher-dimensional tori, it is easy to see that also for $\mathbb{RP}^d$ with $d > 2$,  conditions (iv) and (v) remain satisfied but that $J_\mu(x_0)$ will not vanish in general. To the best of our knowledge, the corresponding CLTs are the first of their kind when the cut locus is containted in the support of the distribution.

\section{Proof of Theorem 1}

To prove Theorem 1, we first consider a generalised version of the Taylor expansion of the inverse exponential map at different base points. That is, for fixed $z\in\M$, we study the Taylor expansion for the vector field $\exp^{-1}_x(z)$ for $x\not\in\C_z$. For this, we fix $z\in\M$ and, for $x_0,x_1\not\in\C_z$ sufficiently close, denote by $\gamma$ the unit speed geodesic segment such that $\gamma(0)=x_0$ and $\gamma(\rho(x_0,x_1))=x_1$.

If $\gamma(t)\not\in\C_z$ for all $t\in(0,\rho(x_0,x_1))$, $\exp^{-1}_{\gamma(t)}(z)$ is a smooth vector field along $\gamma$. Then, it follows from the definition of the covariant derivative that the Taylor expansion for $\exp_x^{-1}(z)$ about $\exp^{-1}_{x_0}(z)$ takes the form
\begin{eqnarray}
\Pi_{x_1,x_0}(\exp^{-1}_{x_1}(z))=\exp^{-1}_{x_0}(z)+(H(x_0\mid z))(\exp_{x_0}^{-1}(x_1))+R(x_0,x_1),
\label{eqn3.1}
\end{eqnarray}
where $H(x'\mid x)$ is defined by \eqref{eqn3.1a} for $x'\in\M\setminus\C_x$ and $\vert \vert R(x_0,x_1) \vert \vert = o(\rho(x_0,x_1))$. .

\vskip 6pt
In the case that $\gamma(t)\in\mathcal H_z\subseteq\mathcal C_z$ for some $t\in(0,\rho(x_0,x_1))$, we have the following result on the approximation of $\Pi_{x_1,x_0}\left(\exp^{-1}_{x_1}(z)\right)$ in terms of $\exp^{-1}_{x_0}(z)$, generalising the Taylor expansion \eqref{eqn3.1} for smooth vector fields.

\begin{proposition}
Let $x_0,x_1,z\in\M$ be such that $x_0,x_1\not\in\C_z$ are sufficiently close and $\gamma$ be the minimal unit speed geodesic from $x_0$ to $x_1$. If there is a parameter $t_z\in(0,\rho(x_0,x_1))$ such that $\gamma(t_z)\in\H_z$, then
\begin{eqnarray}
\begin{array}{rcl}
~\hspace*{6mm}\Pi_{x_1,x_0}\left(\exp^{-1}_{x_1}(z)\right)
&\!=&\exp^{-1}_{x_0}(z)+(H(x_0\mid z))\left(\exp^{-1}_{x_0}(x_1)\right)\\
&&+\,\,\rho_z(\gamma(t_z))\,\kappa(\gamma(t_z)\mid z)\,\Pi_{\gamma(t_z),x_0}\left(\n(\gamma(t_z)\mid z)\right)+o(\rho(x_0,x_1)),
\end{array}
\label{eqn3.2}
\end{eqnarray}
where $H(x'\mid x)$ is defined by \eqref{eqn3.1a}, $\kappa(y\mid x)$ is defined by \eqref{eqn1b} and $\n(y\mid x)$ is defined by \eqref{eqn1c}.
\label{prop2}
\end{proposition}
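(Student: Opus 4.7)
The plan is to exploit the local two-branch structure of $\exp^{-1}_\cdot(z)$ near the crossing point $y := \gamma(t_z) \in \H_z$. By Lemma~\ref{le4}, there is a neighbourhood $\V_y$ of $y$ on which $\rho_z = \min(\phi_{1y},\phi_{2y})$ for two smooth functions $\phi_{jy}(\cdot\mid z)$ arising from disjoint preimage neighbourhoods $\U_{jy} \subset \T_z(\M)$ of $\exp_z$. I would define two smooth vector fields
\[
V_j(y') := -\phi_{jy}(y'\mid z)\,\nabla\phi_{jy}(y'\mid z), \qquad j = 1, 2,
\]
on $\V_y$, noting that $V_j(y') = \exp^{-1}_{y'}(z)$ exactly on the side of $\H_z$ where branch $j$ realises $\rho_z$.

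Without loss of generality, $x_0$ lies on the side where $\rho_z = \phi_{1y}$ and $x_1$ on the other, so that $\exp^{-1}_{x_0}(z) = V_1(x_0)$ and $\exp^{-1}_{x_1}(z) = V_2(x_1)$. I would decompose
\[
\Pi_{x_1,x_0}\exp^{-1}_{x_1}(z) = \Pi_{x_1,x_0} V_1(x_1) + \Pi_{x_1,x_0}\bigl(V_2(x_1) - V_1(x_1)\bigr),
\]
and treat the two summands separately. The smooth Taylor expansion \eqref{eqn3.1} applied to the smooth field $V_1$ along $\gamma$ gives
\[
\Pi_{x_1,x_0} V_1(x_1) = \exp^{-1}_{x_0}(z) + \bigl(H(x_0\mid z)\bigr)\bigl(\exp^{-1}_{x_0}(x_1)\bigr) + o(\rho(x_0, x_1)),
\]
since $V_1(x_0) = \exp^{-1}_{x_0}(z)$ and $DV_1\mid_{x_0}$ coincides with $H(x_0\mid z)$ by \eqref{eqn3.1a}.

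For the second summand, the smooth jump field $W := V_2 - V_1$ on $\V_y$ satisfies, by Lemma~\ref{le5} together with $\phi_{1y}(y\mid z) = \phi_{2y}(y\mid z) = \rho_z(y)$,
\[
W(y) = -\rho_z(y)\bigl(\nabla\phi_{2y}(y\mid z) - \nabla\phi_{1y}(y\mid z)\bigr) = \rho_z(y)\,\kappa(y\mid z)\,\n(y\mid z),
\]
after a consistent sign choice of $\chi$ as in Corollary~\ref{co1}, using \eqref{eqn1b} and \eqref{eqn1c}. A Taylor expansion of $W$ at $y$ followed by parallel transport to $x_0$ along $\gamma$ then yields
\[
\Pi_{x_1,x_0} W(x_1) = \rho_z(y)\,\kappa(y\mid z)\,\Pi_{y,x_0}(\n(y\mid z)) + R(x_0, x_1),
\]
where $R$ collects the Taylor remainder and the parallel-transport discrepancy. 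Adding the two contributions produces the identity claimed in \eqref{eqn3.2}.

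The main obstacle is controlling $R(x_0, x_1)$: the naive Taylor expansion of $W$ at $y$ in the direction $\exp^{-1}_y(x_1) = (\rho(x_0,x_1) - t_z)\dot\gamma(t_z)$ produces a first-order correction of size $O(\rho(x_0, x_1))$ involving $DV_2 - DV_1$ at $y$, which only yields $R = O(\rho(x_0, x_1))$. To tighten this to $R = o(\rho(x_0, x_1))$, I would expand both branches symmetrically about the common base $y$ rather than about $x_0$, exploiting the collinearity of $x_0, y, x_1$ along the unit-speed geodesic $\gamma$ so that the first-order contributions from $V_1$ on one side and $V_2$ on the other fit together with the $(H(x_0\mid z))(\exp^{-1}_{x_0}(x_1))$ term modulo genuinely higher-order residuals.
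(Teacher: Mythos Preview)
Your proposal is correct and, once you carry out the fix sketched in your last paragraph, it coincides with the paper's argument. The paper decomposes $\Pi_{x_1,x_0}(\exp^{-1}_{x_1}(z))-\exp^{-1}_{x_0}(z)$ into three telescoping pieces at the crossing point $y=\gamma(t_z)$: the jump $\rho_z(y)\bigl(\nabla\phi_{1y}-\nabla\phi_{2y}\bigr)$ at $y$, which yields the $\rho_z\,\kappa\,\n$ term exactly; a smooth Taylor step for branch~$1$ from $y$ back to $x_0$; and a smooth Taylor step for branch~$2$ from $y$ forward to $x_1$. The two resulting Hessian contributions, $(H(x_0\mid z))(\exp^{-1}_{x_0}(y))$ and $-\Pi_{x_1,x_0}\bigl((H(x_1\mid z))(\exp^{-1}_{x_1}(y))\bigr)$, are then reassembled into $(H(x_0\mid z))(\exp^{-1}_{x_0}(x_1))$ via the collinearity relations $\exp^{-1}_{x_0}(y)=\frac{t_z}{\rho(x_0,x_1)}\exp^{-1}_{x_0}(x_1)$ and $\exp^{-1}_{x_1}(y)=\frac{\rho(x_0,x_1)-t_z}{\rho(x_0,x_1)}\exp^{-1}_{x_1}(x_0)$. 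Your initial organisation---extending $V_1$ smoothly all the way to $x_1$ and isolating $W=V_2-V_1$---delivers the full Hessian term $(H(x_0\mid z))(\exp^{-1}_{x_0}(x_1))$ in a single Taylor step, which is tidier than the paper's two-piece reassembly; but, as you correctly diagnose, it leaves an $O(\rho(x_0,x_1))$ discrepancy between $\Pi_{x_1,x_0}W(x_1)$ and $\Pi_{y,x_0}W(y)$, driven by $(DV_2-DV_1)|_y$ acting on $\exp^{-1}_y(x_1)$. The remedy you then propose---expanding each branch from $y$ over its own subsegment of $\gamma$ and recombining via collinearity---is precisely the manoeuvre the paper executes in its final display.
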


\begin{proof}
If there is a parameter $t_z\in(0,\rho(x_0,x_1))$ such that $\gamma(t_z)\in\H_z\subseteq\C_z$, such $t_z$ is unique provided $x_0$ and $x_1$ are sufficiently close. Without loss of generality, we may assume that the two smooth functions $\phi_i(\,\cdot\,)=\phi_{i\gamma(t_z)}(\,\cdot\mid z)$, where $\phi_{iy}(\,\cdot\mid x)$ are defined in Lemma \ref{le4}, are chosen such that
\[\rho_z(\gamma(t))=\begin{cases}\phi_1(\gamma(t))&\hbox{if }0\leqslant t\leqslant t_z\\ \phi_2(\gamma(t))&\hbox{if }t_z\leqslant t\leqslant\rho(x_0,x_1).\end{cases}\]
Then, the difference between the two tangent vectors $\Pi_{x_1,x_0}\left(\exp^{-1}_{x_1}(z)\right)$ and $\exp^{-1}_{x_0}(z)$, both in $\mathcal{T}_{x_0}(\M)$, can be expressed as
\begin{eqnarray*}
&&\hspace*{-5mm}\Pi_{x_1,x_0}\left(\exp^{-1}_{x_1}(z)\right)-\exp^{-1}_{x_0}(z)\\
&=&\left\{\Pi_{x_1,x_0}\left(\exp^{-1}_{x_1}(z)\right)-\Pi_{\gamma(t_z),x_0}\left(-\nabla\phi_2(\gamma(t_z))\,\rho_z(\gamma(t_z))\right)\right\}\\
&&+\left\{\Pi_{\gamma(t_z),x_0}\left(-\nabla\phi_1(\gamma(t_z))\,\rho_z(\gamma(t_z))\right)-\exp^{-1}_{x_0}(z)\right\}\\
&&+\,\,\left\{\Pi_{\gamma(t_z),x_0}\left(\nabla\phi_1(\gamma(t_z))\,\rho_z(\gamma(t_z))-\nabla\phi_2(\gamma(t_z))\,\rho_z(\gamma(t_z))\right)\right\}\!.
\end{eqnarray*}
The definitions for $\kappa(y\mid x)$ and $\n(y\mid x)$ given respectively by \eqref{eqn1b} and \eqref{eqn1c} imply that the terms in the third curly bracket on the right hand side above is equal to
\[\rho_z(\gamma(t_z))\,\kappa(\gamma(t_z)\mid z)\,\Pi_{\gamma(t_z),x_0}(\n(\gamma(t_z)\mid z)).\]
By \eqref{eqn3.1}, the difference between the terms in the second curly bracket on the right hand side above and $(H(x_0\mid z))(\exp^{-1}_{x_0}(\gamma(t_z)))$ is $o(\rho(x_0,x_1))$. Since
\[\Pi_{x_1,x_0}^{-1}\circ\Pi_{\gamma(t_z),x_0}=\Pi_{x_0,x_1}\circ \Pi_{\gamma(t_z),x_0}=\Pi_{\gamma(t_z),x_1},\]
a similar application of \eqref{eqn3.1} to the terms in the first curly bracket results in
\begin{equation*}
-\Pi_{x_1,x_0}\left((H(x_1\mid z))(\exp^{-1}_{x_1}(\gamma(t_z)))\right)\!,
\end{equation*}
up to a term of order $o(\rho(x_0,x_1))$. Hence,
\begin{eqnarray*}
&&\hspace*{-5mm}\Pi_{x_1,x_0}\left(\exp^{-1}_{x_1}(z)\right)\\
&=&\exp^{-1}_{x_0}(z)+(H(x_0\mid z))\left(\exp^{-1}_{x_0}(\gamma(t_z))\right)\\&&+\,\,\Pi_{x_1,x_0}\left((H(x_1\mid z))\left(-\exp^{-1}_{x_1}(\gamma(t_z))\right)\right)\\
&&+\,\,\rho_z(\gamma(t_z))\,\kappa(\gamma(t_z)\mid z)\,\Pi_{\gamma(t_z),x_0}\left(\n(\gamma(t_z)\mid z)\right)+o(\rho(x_0,x_1)).
\end{eqnarray*}
However, using
\[\exp^{-1}_{x_0}(x_1)=\frac{\rho(x_0,x_1)}{\rho(x_0,\gamma(t_z))}\exp^{-1}_{x_0}\left(\gamma(t_z)\right)\]
and similarly for $\exp^{-1}_{x_1}\left(\gamma(t_z)\right)$, as well as noting $\rho(x_0,x_1)=\rho(x_0,\gamma(t_z))+\rho(\gamma(t_z),x_1)$, we have
\begin{eqnarray*}
&&\hspace*{-5mm}(H(x_0\mid z))\left(\exp^{-1}_{x_0}(\gamma(t_z))\right)+\Pi_{x_1,x_0}\left((H(x_1\mid z))\left(-\exp^{-1}_{x_1}(\gamma(t_z))\right)\right)\\
&=&(H(x_0\mid z))\left(\exp^{-1}_{x_0}(x_1)\right)\\
&&-\,\frac{\rho(\gamma(t_z),x_1)}{\rho(x_0,x_1)}\left\{(H(x_0\mid z))\left(\exp^{-1}_{x_0}(x_1)\right)\!+\!\Pi_{x_1,x_0}\left((H(x_1\mid z))\left(\exp^{-1}_{x_1}(x_0)\right)\right)\right\}\\
&=&(H(x_0\mid z))\left(\exp^{-1}_{x_0}(x_1)\right)+o(\rho(x_0,x_1)),
\end{eqnarray*}
so that the required result follows.
\end{proof}

In the remainder of the paper it will be useful to use the different but equivalent representation of $\mathcal{A}_{\delta}(x)$ in (\ref{curlyAdelta}) given by
\begin{equation}
\mathcal{A}_{\delta}(x)=\{ z \in \M: \, \mathcal{C}_z \cap B_\delta(x) \neq \emptyset\}.
\label{curlyAdelta2}
\end{equation}
To see that (\ref{curlyAdelta}) and (\ref{curlyAdelta2}) are equivalent, note that
\begin{align}
\mathcal{A}_{\delta}(x)&=\{ z \in \M: \, \mathcal{C}_z \cap B_\delta(x) \neq \emptyset\} \nonumber\\
&=\left \{ z \in \M: \, \textrm{there exists} \,  y \, \, \textrm{such that} \, \, y \in \mathcal{C}_z \, \, \& \, \, y \in B_\delta(x)   \right \} \nonumber \\
&=\left \{ z \in \M: \, \textrm{there exists}\,  \, y \,  \, \textrm{such that} \, \,  z \in \mathcal{C}_y \, \, \& \, \,  y \in B_\delta(x)   \right \} \nonumber \\
&= \bigcup_{y \in B_{\delta}(x)} \mathcal{C}_y,
\label{curlyAdelta3}
\end{align}
using the fact that $z \in \mathcal{C}_y$ if and only if $y \in \mathcal{C}_z$.

\vskip 0.2truein 
\noindent\textit{Proof of Theorem} 1. 
When $x_0$ and $x_1$ are sufficiently close, write $\gamma$ for the unique unit speed geodesic from $x_0$ to $x_1$ and $\N^*_{x_0,x_1}$ for the set defined by
\begin{equation}
\N^*_{x_0,x_1}=\{z\in\M\mid\gamma(t)\in\H_z\hbox{ for some }t\in(0,\rho(x_0,x_1))\}.
\label{curly_N_star}
\end{equation}
It follows from condition (iv) of Theorem 1 that the volume of $\N^*_{x_0,x_1}$ is $O(\rho(x_0,x_1))$, because for $\delta>0$ sufficiently small and $x_1$ such that $\rho(x_0, x_1)=\delta$,
$\mathcal{N}_{x_0, x_1}^\ast \subseteq \mathcal{A}_\delta(x_0)$ and also $\mu(\mathcal{A}_\delta (x_0))=O(\delta)$ as $\delta \downarrow 0$.   Similar to $\N^*_{x_0,x_1}$, we also write $\mathcal A_{x_0,x_1}$ for the set defined by
\[\mathcal A_{x_0,x_1}=\{z\in\M\mid\gamma(t)\in\C_z\hbox{ for some }t\in(0,\rho(x_0,x_1))\}.\] 
Then $\mathcal A_{x_0,x_1}\supseteq\N^*_{x_0,x}$.

\vskip 6pt
Since $x_0$ is the Fr\'echet mean of $\mu$, $G_\mu(x_0)=0$. Under the given assumption, we also have that, for $x\in\M$ in a neighbourhood of $x_0$, $\mu(\{\xi\in\C_x\})=0$. Thus, it follows from 
\eqref{eqn3.1} and Proposition \ref{prop2} that, for $x$ sufficiently close to $x_0$, 
\begin{eqnarray*}
&&\hspace*{-5mm}\Pi_{x,x_0}(G_\mu(x))\\
&=&\left(\int_{\M}H(x_0\mid\xi)\,\d\mu(\xi)\right)\left(\exp^{-1}_{x_0}(x)\right)\\
&&+\int_{\M}\rho_\xi(\gamma(t_\xi))\,\kappa(\gamma(t_\xi)\mid \xi)\,\Pi_{\gamma(t_\xi),x_0}\left(\n(\gamma(t_\xi)\mid \xi)\right)\,1_{\N^*_{x_0,x}}(\xi)\,\d\mu(\xi)\\
&&+\,\,\Pi_{x,x_0}\left(\int_{\mathcal A_{x_0,x}\setminus\N^*_{x_0,x}}\exp^{-1}_x(\xi)\,1_{\{\xi\not\in\C_x\}}\d\mu(\xi)\right)\\
&&-\left(\int_{\mathcal A_{x_0,x}\setminus\N^*_{x_0,x}}H(x_0\mid\xi)\,\d\mu(\xi)\right)\left(\exp^{-1}_{x_0}(x)\right)\!.
\end{eqnarray*}
Since condition (v) of Theorem together with Lemma \ref{le3} ensures that 
\[\int_{\mathcal A_{x_0,x}\setminus\N^*_{x_0,x}}H(x_0\mid\xi)\,\d\mu(\xi)=o(\rho(x_0,x)),\]
and since the boundedness of $\M$ and Lemma \ref{le3} together imply that
\[\int_{\mathcal A_{x_0,x}\setminus\N^*_{x_0,x}}\exp^{-1}_x(\xi)\,1_{\{\xi\not\in\C_x\}}\d\mu(\xi)=o(\rho(x_0,x)),\]
we have that 
\begin{eqnarray*}
&&\hspace*{-5mm}\Pi_{x,x_0}(G_\mu(x))\\
&=&\left(\int_{\M}H(x_0\mid\xi)\,\d\mu(\xi)\right)\left(\exp^{-1}_{x_0}(x)\right)\\
&&+\int_{\M}\rho_\xi(\gamma(t_\xi))\,\kappa(\gamma(t_\xi)\mid \xi)\,\Pi_{\gamma(t_\xi),x_0}\left(\n(\gamma(t_\xi)\mid \xi)\right)\,1_{\N^*_{x_0,x}}(\xi)\,\d\mu(\xi)\\
&&+\,\,o(\rho(x_0,x)).
\end{eqnarray*}
Thus, by the definition \eqref{eqn5c} of $\Psi_\mu (x_0)$, it is sufficient to show that
\begin{eqnarray}
\begin{array}{rcl}
&&\displaystyle\int_{\M}\rho_\xi(\gamma(t_\xi))\,\kappa(\gamma(t_\xi)\mid\xi)\,\Pi_{\gamma(t_\xi),x_0}(\n(\gamma(t_\xi)\mid\xi))\,1_{\N^*_{x_0,x}}(\xi)\,\d\mu(\xi)\\
&&\qquad=\left(J_\mu(x_0)\right)(\exp^{-1}_{x_0}(x))+o(\rho(x_0,x)),
\end{array}
\label{eqn8}
\end{eqnarray}
where $J_\mu(x_0)$ is defined by \eqref{eqn7}.

\vskip 6pt
For this, we note that the functions $\chi_i(\,\cdot\mid x)$ given in Corollary \ref{co1} are defined on a neighbourhood of $\H_x$. Thus, we may extend the definitions of the corresponding $\kappa(\,\cdot\mid x)$ and $\n(\,\cdot\mid x)$ given in \eqref{eqn1b} and \eqref{eqn1c} to that neighbourhood of $\H_x$. This implies that
\begin{eqnarray}
\begin{array}{rcl}
&&\hspace*{-5mm}\displaystyle\int_{\M}\rho_\xi(\gamma(t_\xi))\,\kappa(\gamma(t_\xi)\mid\xi)\,\Pi_{\gamma(t_\xi),x_0}(\n(\gamma(t_\xi)\mid\xi))\,1_{\N^*_{x_0,x}}(\xi)\,\d\mu(\xi)\\
&=&\displaystyle\int_{\M}\rho_\xi(x_0)\,\kappa(x_0\mid\xi)\,\n(x_0\mid\xi)\,1_{\N^*_{x_0,x}}(\xi)\,\d\mu(\xi)+O(\rho(x_0,x)^2).
\end{array}
\label{eqn3.3}
\end{eqnarray}

To analyse the right hand side of \eqref{eqn3.3} we consider, for any $z\in\N^*_{x_0,x_1}$, the minimal unit speed geodesic $\beta_z$ from $z$ to $x_0$. 
Extending $\beta_z$ backwards beyond $z$, let 
$y_z$ be the first hitting point of $\C_{x_0}$ on the extension; see Figure \ref{fig1}.
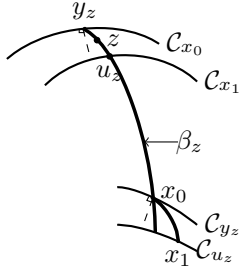
\begin{figure}
\begin{center}
\begin{tikzpicture}[scale=0.5]
\draw [line width=1.3pt] plot [smooth, tension=1.2] coordinates { (0.5,3) (1.8,1) (2.4,-2.4)};
\draw [line width=1.3pt] plot [smooth, tension=1.2] coordinates { (2.33,-1.48) (2.8,-2) (3,-2.63)};
\draw [line width=0.8pt] plot [smooth, tension=1.2] coordinates {(-1.5,2.2) (0.5,3) (2.5,2.6)};
\draw [line width=0.8pt] plot [smooth, tension=1.2] coordinates {(-0.5,1.5) (1.5,2.3) (3.5,1.6)};
\draw [line width=0.8pt] plot [smooth, tension=1.2] coordinates {(1.4,-1.2) (2.4,-1.5) (3.5,-2.2)};
\draw [line width=0.8pt] plot [smooth, tension=1.2] coordinates {(1.4,-2.2) (2.4,-2.4) (3.5,-2.9)};
\draw [dashed] (0.5,3) -- (0.7,2.2);
\draw [dashed] (2.33,-1.5) -- (2.05,-2.3);
\draw (0.4,3) -- (0.4,2.9) -- (0.54,2.9);
\draw (2.23,-1.45) -- (2.18,-1.55) -- (2.32,-1.6);
\fill (0.55,3) circle (1.5pt) node[above] {$y_z$};
\fill (1.2,2.27) circle (2.5pt) node[left, below] {$u_z$};
\fill (0.87,2.7) circle (2.5pt) node[right] {$z$};
\fill (2.33,-1.48) circle (1.5pt);
\node at (2.9,-1.4) {$x_0$};
\fill (3,-2.63) circle (1.5pt) node[left, below] {$x_1$};
\node at (3.2,2.6) {$\C_{x_0}$};
\node at (4.2, 1.6) {$\C_{x_1}$};
\node at (4.2,-2.2) {$\C_{y_z}$};
\node at (4,-2.9) {$\C_{u_z}$};
\draw [<-] (2.1,0) -- (3,0);
\node at (3.3,0) {$\beta_z$};
\end{tikzpicture}
\end{center}
\caption{Relevant points on the minimal geodesic $\beta_z$ from $x_0$ to $z$ when $z\not\in\C_{x_0}$}
\label{fig1}
\end{figure}
Let
\[\N_{x_0,x_1}=\{z\in\M\mid\gamma(t)\in\H_z\hbox{ for some }t\in(0,\rho(x_0,x_1))\hbox{ and }y_z\in\H_{x_0}\}.\]
Then, $\N_{x_0,x_1}$ is a Lebesgue measurable subset of $\N^*_{x_0,x_1}$ and the difference between the volumes of $\N_{x_0,x_1}$ and of $\N^*_{x_0,x_1}$ is $o(\rho(x_0,x_1))$.
Since, by (\ref{metric_stability}) and condition (iii) of Theorem 1, which states that  in a neighbourhood of $\C_{x_0}$, $\mu$ is absolutely continuous with respect to the volume measure ${\rm vol}_{\M}(\cdot)$ with continuous Radon-Nikodym derivative $\psi$, \eqref{eqn3.3} can be expressed in terms of $\N_{x_0,x}$ as
\begin{eqnarray}
\begin{array}{rcl}
&&\hspace*{-5mm}\displaystyle\int_{\M}\rho_\xi(\gamma(t_\xi))\,\kappa(\gamma(t_\xi)\mid\xi)\,\Pi_{\gamma(t_\xi),x_0}(\n(\gamma(t_\xi)\mid\xi))\,1_{\N^*_{x_0,x}}(\xi)\,\d\mu(\xi)\\
&=&\displaystyle\int_{\M}\rho_\xi(x_0)\,\kappa(x_0\mid\xi)\,\n(x_0\mid\xi)\,1_{\N_{x_0,x}}(\xi)\,\d\mu(\xi)+O(\rho(x_0,x)^2)\\
&=&\displaystyle\int_{\N_{x_0,x}}\!\!\!\!\!\!\rho_\xi(x_0)\,\kappa(x_0\mid\xi)\,\n(x_0\mid\xi)\,\psi(\xi)\,\d\hbox{vol}(\xi)+O(\rho(x_0,x)^2),
\end{array}
\label{eqn3.4}
\end{eqnarray}
where $x$ is sufficiently close to $x_0$. If we write $u_z$ for the point on $\beta_z$ that lies in $\C_{x_1}$ as in Figure \ref{fig1}, then the volume of the local cross-sectional slice of $\N_{x_0,x_1}$ at $z\in\N_{x_0,x_1}$ can be approximated by $\d\v_{\H_{x_0}}(y_z)\,\langle\n(y_z\mid x_0),\,\exp^{-1}_{y_z}(u_z)\rangle$ and, since $y_z\in\H_{x_0}$, we also have
\begin{eqnarray*}
\frac{\langle\n(y_z\mid x_0),\,\exp^{-1}_{y_z}(u_z)\rangle}{\langle\n(x_0\mid y_z),\,\exp^{-1}_{x_0}(x_1)\rangle}\approx\tau'_{y_z|x_0}(0),
\end{eqnarray*}
where both $\n(y_z\mid x_0)$ and $\n(x_0\mid y_z)$ are chosen such that the inner products are non-negative and where $\tau'_{y|x}(0)$ is given in \eqref{eqn5b}. These two facts together imply that, for $z\in\N_{x_0,x_1}$, 
\[\d\hbox{vol}(z)\approx\langle\n(x_0\mid y_z),\,\exp^{-1}_{x_0}(x_1)\rangle\,\tau'_{y_z|x_0}(0)\,\d\v_{\H_{x_0}}(y_z).\]
Using this and the continuity in $z$ of $\rho_z(x)$, $\kappa(x\mid z)$, $\n(x\mid z)$ and $\psi(z)$, the dominant term on the right hand side of the second equality in \eqref{eqn3.4} can be expressed as
\begin{eqnarray*}
&&\hspace*{-5mm}\int_{\N_{x_0,x}}\rho_\xi(x_0)\,\kappa(x_0\mid\xi)\,\n(x_0\mid\xi)\,\psi(\xi)\,\d\hbox{vol}(\xi)\\
&=&\int_{\H_{x_0}}\!\!\!\!\!\rho_y(x_0)\,\kappa(x_0\mid y)\,\n(x_0\mid y)\,\langle\n(x_0\mid y),\,\exp^{-1}_{x_0}(x)\rangle\,\tau'_{y|x_0}(0)\,\psi(y)\,\d\hbox{vol}_{\H_{x_0}}(y)\\
&&+\,\,o(\rho(x_0,x)).
\end{eqnarray*}
Hence, \eqref{eqn8} follows from the definition \eqref{eqn7} of $J_\mu(x_0)$ as required.  \hfill $\square$

\section{Proof of Proposition 1 and Theorem 2}

\subsection{Proof of Proposition 1}   For each $n \geq 1$, let $\hat{\xi}_n \in \mathcal{G}_n$  denote any measureable selection from $\mathcal{G}_n$.  From the strong law of large numbers in Ziezold (1977), and using the assumption that $x_0$ is the unique population mean, almost surely
\[
\bigcap_{n=1}^\infty \overline{\bigcup_{k=n}^\infty \mathcal{G}_k}\subseteq  \{x_0\},
\]
where a horizontal line over a set indicates set closure.  From elementary considerations, the first set inclusion below holds and therefore the set $\mathcal{G}_0$ of limit points is
\[
\mathcal{G}_0=\bigcap_{n=1}^\infty \overline{\bigcup_{k=n}^\infty \{ \hat{\xi}_k\}}\subseteq \bigcap_{n=1}^\infty \overline{\bigcup_{k=n}^\infty \mathcal{G}_k}\subseteq  \{x_0\},
\]
where, for each $k\geq 1$, $\hat{\xi}_k \in \mathcal{G}_k$.  Since $\mathcal{G}_0 \subseteq \{x_0\}$, there are two possibilities: either $\mathcal{G}_0=\{x_0\}$, in which case the proposition follows; or, alternatively, $\mathcal{G}_0=\emptyset$, the empty set.  However, $\M$ is compact, so $\{\hat{\xi}_n\}$ must have a convergent subsequence with a limit $x_1 \in \M$.  Moreover, we must have $x_1 \in \mathcal{G}_0$ because $x_1$ is an accumulation point of the sequence.  Therefore, $x_1=x_0$, and consequently $\rho(\hat{\xi}_n, x_0) \rightarrow 0$ almost surely as required.  \hfill $\square$

\subsection{An elementary lemma}

We first introduce some notation.  If  $X=(X_1, \ldots , X_m)^\top$ and $x=(x_1, \ldots , x_m)^\top$ are vectors with real components then statements such as $\{X \leq x\}$ and $\{\vert X\vert \leq x\}$ are interpreted component-wise as $\{X_1 \leq x_1, \ldots , X_m \leq x_m\}$
and $\{\vert X_1\vert \leq x_1, \ldots , \vert X_m \vert \leq x_m\}$, respectively.  Also, denote by $\Phi(x; \Sigma)$ the cumulative distribution function of a zero-mean multivariate Gaussian distribution with covariance matrix $\Sigma$.  The Euclidean norm, $(w^\top w)^{1/2}$, of a vector $w \in \mathbb{R}^m$ is denoted $\vert \vert w \vert \vert$.  The following lemma is proved in Appendix B.

\begin{lemma}
Let $X, Y$ denote  $\mathbb{R}^m$-valued random vectors defined on an arbitrary  probability space $(\Omega, \mathcal{F}, \mathbb{P})$.  Let  $w \in \mathbb{R}^m$ denote a non-random vector with positive components.  Then
\begin{equation}
\vert  \mathbb{P}[X+Y \leq x]  - \mathbb{P}[X \leq x]\vert  
\leq \mathbb{P}[X \leq x+w]-\mathbb{P}[X\leq x-w] +2 \mathbb{P}[\{\vert Y \vert \leq w\}^c].
\label{first_bit}
\end{equation}
Moreover, suppose that for some $\epsilon>0$ and some $m \times m$  covariance matrix $\Sigma$, 
\begin{equation}
\sup_{x \in \mathbb{R}^m} \vert \mathbb{P}[X \leq x] - \Phi(x;\Sigma) \vert \leq \epsilon.
\label{asymptoticnormal}
\end{equation}
Then, for a constant $c_0>0$ depending only on $\Sigma$,
\begin{equation}
\vert  \mathbb{P}[X+Y \leq x] - \Phi(x;\Sigma)\vert 
\leq 3 \epsilon + 2 c_0 \vert \vert w \vert \vert  + 2 \mathbb{P}[\{\vert Y \vert \leq w\}^c].
\label{assymnorm2}
\end{equation}
\end{lemma}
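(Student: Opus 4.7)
The plan is to prove both inequalities by conditioning on the ``good'' event $B=\{|Y|\leq w\}$ on which the perturbation $Y$ is componentwise controlled by $w$. On $B$ we have $-w\leq Y\leq w$ (componentwise), so on $B$ the chain of set inclusions
\[
\{X\leq x-w\}\cap B\subseteq \{X+Y\leq x\}\cap B\subseteq \{X\leq x+w\}\cap B
\]
holds. Writing $\mathbb{P}[X+Y\leq x]=\mathbb{P}[\{X+Y\leq x\}\cap B]+\mathbb{P}[\{X+Y\leq x\}\cap B^c]$ and using $\mathbb{P}[B^c]$ as a trivial bound on the second summand, the above inclusions sandwich $\mathbb{P}[X+Y\leq x]$:
\[
\mathbb{P}[X\leq x-w]-\mathbb{P}[B^c]\;\leq\;\mathbb{P}[X+Y\leq x]\;\leq\;\mathbb{P}[X\leq x+w]+\mathbb{P}[B^c].
\]

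For the first inequality of the lemma I will then subtract $\mathbb{P}[X\leq x]$ from each side; since the CDF is monotone in the partial order, $\mathbb{P}[X\leq x-w]\leq \mathbb{P}[X\leq x]\leq \mathbb{P}[X\leq x+w]$, so both $\mathbb{P}[X+Y\leq x]-\mathbb{P}[X\leq x]$ and its negative are bounded above by $\mathbb{P}[X\leq x+w]-\mathbb{P}[X\leq x-w]+\mathbb{P}[B^c]$. Replacing $\mathbb{P}[B^c]$ by the slacker bound $2\mathbb{P}[B^c]$ yields (\ref{first_bit}).

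For the second inequality I will apply the triangle inequality,
\[
|\mathbb{P}[X+Y\leq x]-\Phi(x;\Sigma)|\leq |\mathbb{P}[X+Y\leq x]-\mathbb{P}[X\leq x]|+|\mathbb{P}[X\leq x]-\Phi(x;\Sigma)|,
\]
bound the second term by $\epsilon$ via (\ref{asymptoticnormal}), and use (\ref{first_bit}) on the first. To convert the resulting $\mathbb{P}[X\leq x+w]-\mathbb{P}[X\leq x-w]$ into a Gaussian quantity I will apply (\ref{asymptoticnormal}) twice (at the points $x+w$ and $x-w$), picking up an extra $2\epsilon$, which gives
\[
\mathbb{P}[X\leq x+w]-\mathbb{P}[X\leq x-w]\;\leq\;\Phi(x+w;\Sigma)-\Phi(x-w;\Sigma)+2\epsilon.
\]
Finally I will bound $\Phi(x+w;\Sigma)-\Phi(x-w;\Sigma)$ by noting that the set $\{y\leq x+w\}\setminus\{y\leq x-w\}$ is contained in $\bigcup_{i=1}^m\{y:\,x_i-w_i<y_i\leq x_i+w_i\}$; a union bound plus the uniform bound on each marginal Gaussian density of $X^\ast\sim \mathfrak{N}_m(0,\Sigma)$ gives $\sum_i 2w_i/\sqrt{2\pi\sigma_{ii}}\leq c_0\|w\|$ with $c_0$ depending only on $\Sigma$ (through the diagonal entries and the dimension $m$ via Cauchy--Schwarz). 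Combining these estimates and adjusting the constant in front of $\|w\|$ to $2c_0$ (to match the stated form) produces (\ref{assymnorm2}).

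There is no serious obstacle here: the lemma is purely combinatorial in its first part and purely analytic in the second. The only point requiring mild care is ensuring that the constant $c_0$ depends only on $\Sigma$ and not on $x$, $w$, or the law of $(X,Y)$; this is automatic from the uniform density bound of the centered Gaussian marginals. I will present the argument for general $m$, so that the same statement applies in the application to $\exp_{x_0}^{-1}(\hat{\xi}_n)$ in $\T_{x_0}(\M)\cong\mathbb R^m$ used in the proof of Theorem 2.
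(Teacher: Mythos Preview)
Your proposal is correct and follows essentially the same approach as the paper: condition on the event $B=\{|Y|\le w\}$, use the set inclusions $\{X\le x-w\}\cap B\subseteq\{X+Y\le x\}\cap B\subseteq\{X\le x+w\}\cap B$, and then invoke the uniform closeness of the law of $X$ to the Gaussian together with a Lipschitz-type bound on $\Phi(\cdot;\Sigma)$. The only cosmetic differences are that (i) your sandwich argument for the first inequality is slightly more direct and in fact yields the sharper bound with $\mathbb{P}[B^c]$ rather than $2\mathbb{P}[B^c]$ before you relax it, whereas the paper routes through $\mathbb{P}[A_0\cap B]$ and genuinely picks up the second $\mathbb{P}[B^c]$; and (ii) for the Gaussian increment the paper appeals to a Taylor expansion of $\Phi(\cdot;\Sigma)$ while you use a union bound on the marginals---both give a constant depending only on $\Sigma$.
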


Our proof of Theorem 2, in particular Step 1, makes use of this lemma. 

\subsection{Proof of Theorem 2}

The proof of Theorem 2 is broken into two steps.  In the first step we explain how Lemma 5 will be applied.  In the subsequent step, we explain how to make the right-hand side of (\ref{assymnorm2}) arbitrarily small uniformly for all $x \in \mathbb{R}^m$ and therefore the CLT in Theorem 2 will have been proved.

\vskip 0.2truein
\noindent \textbf{Step 1}.  \textit{Application of Lemma 5}.

\vskip 0.1truein
Write
\begin{equation}
G_{\hat{\mu}_n}(x)=\frac{1}{n}\sum_{i=1}^n \exp_{x}^{-1}(\xi_i)1_{\{\xi_i \notin \mathcal{C}_x\}}=\int_{\M}\exp_{x}^{-1}(\xi)1_{\{\xi \notin \mathcal{C}_x\}}\d\hat{\mu}_n(\xi),
\label{empirical_123}
\end{equation}
where $\hat{\mu}_n$ is the empirical distribution function on $\M$ based on the random sample $\xi_1, \ldots , \xi_n$ and define the vector field $\{Z_n(x): \, x \in \M\}$ on $\M$ by
\begin{equation}
Z_n(x)=\sqrt{n}\left [G_{\hat{\mu}_n}(x) - G_\mu(x)  \right],
\label{Z_vecfield}
\end{equation}
where $G_\mu (x)$ is defined in (\ref{G(x)}).  Under the conditions of Theorem 1 and 2, the population  Fr\'echet mean $x_0$ is a stationary minimum of (\ref{F_mu}) and, in particular,
\begin{equation}
G_\mu(x_0)= \int_{\M} \exp_{x_0}^{-1}(\xi) 1_{\{\xi \notin \mathcal{C}_{x_0}\}}\d\mu(\xi) = 0, 
\label{stationary_pop_mean}
\end{equation}
i.e. the zero element in $\mathcal{T}_{x_0}(\M)$, which follows from integrating (\ref{grad_d2}) over $\M$ with respect to the probability measure  $\mu$ and putting $x=x_0$.  Hence
\[
Z_n(x_0)=\sqrt{n}G_{\hat{\mu}_n}(x_0).
\]
Denote the Euclidean norm (which is the induced Riemannian tangent space  norm) on $\mathcal{T}_{x_0}(\M)$ by $\vert \vert \cdot \vert \vert$.  Since $\vert \vert \exp_{x_{0}}^{-1}(\xi)1_{\{\xi \notin \mathcal{C}_{x_0}\}}\vert \vert $ is bounded over $\xi \in M$, the LHS of (\ref{Z_vecfield}) with $x=x_0$ follows a central limit theorem in the tangent space, i.e. 
\begin{equation}
Z_n(x_0)=\frac{1}{\sqrt{n}}\sum_{i=1}^n \exp_{x_0}^{-1}(\xi_i)1_{\{\xi \notin \mathcal{C}_{x_0}\}} \overset{d} \to \mathfrak{N}_m(0_m,V_0),
\label{linear_CLT}
\end{equation}
where $V_0=\textrm{Cov}\{\exp_{x_0}^{-1}(\xi_1)1_{\{\xi \notin \mathcal{C}_{x_0}\}}\}=\textrm{Cov}\{\exp_{x_0}^{-1}(\xi_1)\}$.  

Moreover,  $\hat{\xi}_n \in \mathcal{G}_n$, which is assumed to be a measureable selection from $\mathcal{G}_n$ as in Proposition 1,  satisfies (\ref{stardust}) and consequently,
\begin{align}
Z_n(\hat{\xi}_n)&=\sqrt{n} [G_{\hat{\mu}_n}(\hat{\xi}_n)-G_\mu(\hat{\xi}_n)] \nonumber\\
&=- \sqrt{n}G_{\mu}(\hat{\xi}_n).
\label{other_point}
\end{align}

Define 
\begin{equation}
T_1=\Pi_{\hat{\xi}_n, x_0}Z_n(\hat{\xi}_n) - Z_n(x_0)
\label{def_T_1}
\end{equation}
Then, using (\ref{def_T_1}), (\ref{other_point}) and Theorem 1,  it is seen that
\begin{align}
T_1&=\Pi_{\hat{\xi}_n, x_0} Z_n(\hat{\xi}_n)-Z_n(x_0)\nonumber \\
&=-\sqrt{n} \Pi_{\hat{\xi}_n, x_0}G_\mu(\hat{\xi}_n)-Z_n(x_0) \nonumber \\
&=\sqrt{n} \Psi_\mu(x_0)\exp_{x_0}^{-1}(\hat{\xi}_n)-\sqrt{n}R(\hat{\xi}_n, x_0) - Z_n(x_0).
\label{ensign1}
\end{align}
Since, by assumption (vi) of Theorem 2, $\Psi_\mu(x_0)$ has full rank, it follows that
\begin{align}
\sqrt{n}\exp_{x_0}^{-1}(\hat{\xi}_n)&=\Psi_\mu(x_0)^{-1}Z_n(x_0) +\Psi_\mu(x_0)^{-1} \left \{T_1 + \sqrt{n}R(\hat{\xi}_n, x_0)\right \} \nonumber \\
&=X+Y,
\label{ensign2}
\end{align}
where
\begin{equation}
X=\Psi_\mu(x_0)^{-1}Z_n(x_0)
\label{ensign3}
\end{equation}
and
\begin{equation}
Y=\Psi_\mu(x_0)^{-1} \left \{T_1 + \sqrt{n}R(\hat{\xi}_n, x_0)\right \}\!.
\label{ensign4}
\end{equation}

To establish Theorem 2, we apply Lemma 5 with $X$ and $Y$ defined in (\ref{ensign3}) and (\ref{ensign4}), respectively.  Since, from (\ref{linear_CLT}), we know that 
$Z_n(x_0) \overset{d} \rightarrow \mathfrak{N}_m(0_m, V_0)$, it follows that $\Psi_\mu(x_0)^{-1}Z_n(x_0)$ is asymptotically normal with mean vector the zero vector and covariance matrix $\Psi_\mu(x_0)^{-1} V_0\left\{\Psi_\mu(x_0)^\top\right \}^{-1}$.
Moreover, as $n \rightarrow \infty$,  a suitable sequence of $w$'s such that $\vert \vert w \vert \vert \rightarrow 0$ and $\mathbb{P}[\{\vert Y \vert \leq w\}^c] \rightarrow 0$ can always be found provided all components of $Y$ go to $0$ in probability.  Consequently, to complete the proof of Theorem 2,  it is sufficient to show that $\vert \vert Y \vert \vert \overset{p} \rightarrow 0$, which is proved in Step 2.

\vskip 0.2truein
\noindent \textbf{Step 2}.  \textit{Show that $\vert \vert Y \vert \vert \overset{p} \rightarrow 0$.}

\vskip 0.1truein
In Step 2 we first show that $\vert \vert T_1 \vert \vert \overset{p} \rightarrow 0$, where $T_1$ is defined in (\ref{ensign1}).  Then we deduce that $\vert \vert Y \vert \vert \overset{p} \rightarrow 0$, where $Y$ is defined in (\ref{ensign4}).  To establish the result for $T_1$  we shall make use of results from empirical process theory.
A key step is to approximate
\begin{equation}
\mathbb{E}\left [ \textrm{tr} \left \{\left (\Pi_{x,x_0}Z_n(x) - \Pi_{y,x_0} Z_n(y) \right )^\top  \left ( \Pi_{x,x_0}Z_n(x)- \Pi_{y,x_0}Z_n(y)  \right )   \right \} \right ]\!.
\label{SD_norm_4}
\end{equation}
However, as $\Pi_{x,x_0}$ and $\Pi_{y,x_0}$ are vector space  isomorphisms from $\mathcal{T}_x(\M)$ to $\mathcal{T}_{x_0}(\M)$ and $\mathcal{T}_y(\M)$ to $\mathcal{T}_{x_0}(\M)$, respectively, it follows from the definition of $Z_n(x)$ in (\ref{Z_vecfield}) that $\Pi_{x,x_0} Z_n(x) - \Pi_{y,x_0}Z_n(y)$ in (\ref{SD_norm_4}) is an IID sum of terms $q_x(\xi_i)-q_y(\xi_i)$, where
\[
q_x(\xi_i) =\Pi_{x,x_0} \exp_x^{-1} (\xi_i)1_{\{\xi_i \notin \mathcal{C}_x\}} - \Pi_{x,x_0}G_\mu(x),
\]
for $i=1, \ldots , n$, with a similar definition for $q_y(\xi_i)$,  and with $q_x(\xi_i), q_y(\xi_i)  \in \mathcal{T}_{x_0}(\M)$.  It follows that (\ref{SD_norm_4}) is equal to
\begin{equation}
\int_{\M} \left \{  q_x(\xi)-q_y(\xi)   \right \}^\top \left \{q_x (\xi) - q_y(\xi)\right \} \d\mu(\xi).
\label{SD_norm_5}
\end{equation}
  It is also assumed below that $x,y  \in B_{\delta_0}(x_0)$, the open ball in $\M$ of radius $\delta_0$ centred at $x_0$, where $0 <\delta_0 <\delta$, and using condition (iii) of Theorem 1, $\delta$ has been chosen to be sufficiently small for $\mu$ to be absolutely continuous on $\mathcal{A}_\delta (x_0)$, where $\mathcal{A}_\delta (x_0)$ is defined in (\ref{curlyAdelta}) and $x_0 \in \M$ is the population Fr\'echet mean of $\mu$, assumed to be unique. 

We may write (\ref{SD_norm_5}) as
\[
\int_{\M}  = \int_{\mathcal{A}_\delta(x_0)} + \int_{\M \setminus \mathcal{A}_\delta(x_0)}.
\]
For $\xi  \in \M \setminus \mathcal{A}_\delta(x_0)$, using (\ref{eqn3.1}) and Theorem 1, we have
\begin{align*}
q_x(\xi) - q_y(\xi)& = \Pi_{x,x_0}\{\exp_x^{-1}(\xi)1_{\{\xi \notin \mathcal{C}_x\}} - G_\mu(x)\} \\
& \hskip 0.3truein -\Pi_{y,x_0}\{\exp_y^{-1}(\xi)1_{\{\xi \notin \mathcal{C}_y\}}-G_\mu(\xi)\} \\
& =H(x_0\vert \xi) \{exp_{x_0}^{-1}x - \exp_{x_0}^{-1}(y)\} + R_1(x_0,x) - R_1(x_0,y)\\
& \hskip 0.2truein - \Psi_\mu(x_0) \{\exp_{x_0}^{-1}(x) - \exp_{x_0}^{-1}(y)\} +R(x,x_0)- R(y,x_0);
\end{align*}
and also, since $\M$ is smooth and compact, it follows that
\[
\vert \vert R(x,x_0) - R(y,x_0)\vert \vert = O(\rho(x,y))=\vert \vert R_1(x_0,x)-R_1(x_0,y)\vert \vert.
\]
Consequently, for $\xi \in\M \setminus  \mathcal{A}_\delta(x_0)$,
\begin{equation}
\vert \vert q_x(\xi) - q_y(\xi) \vert \vert = O(\rho(x,y)).
\label{SD_norm_17}
\end{equation}
Therefore, since (\ref{SD_norm_17}) holds uniformly for $z \in \M \setminus \mathcal{A}_\delta(x_0)$ by compactness, connectedness and smoothness of $\M$, it follows that
\begin{equation}
\int_{\M \setminus \mathcal{A}_\delta(x_0)} \{q_x(\xi) - q_y(\xi)\}^\top \{q_x(\xi) - q_y(\xi)  \} \d\mu(\xi) = O(\rho(x,y)^2).
\label{SD_norm_18}
\end{equation}

To approximate the integral of $\{q_x(\xi) -q_y(\xi)\}^\top\{ q_x(\xi)-q_y(\xi) \}$ on the set $\mathcal{A}_{\delta}(x_0)$, we use the following facts: recall that $\delta>0$ has been chosen sufficiently small so that the Radon-Nickodym derivative of $\mu$ has a continuous version on $\mathcal{A}_{\delta}(x_0)$ (see assumption (iii) of the theorems);  the Riemannian volume of $\mathcal{A}_{\delta}(x_0)$ satisfies
 $\textrm{vol}_{\M}(\mathcal{A}_{\delta}(x_0))=O(\rho(x,y))$ (see  immediately below (\ref{curly_N_star})); and  $q_x(\xi)$ is bounded on $\M$.  As a consequence of these facts, 
\begin{equation}
\int_{\mathcal{A}_{\delta}(x_0)}  \textrm{tr}\left  [ \{ q_x(\xi) - q_y(\xi) \}^\top \{q_x(\xi) - q_y(\xi)\} \right ]\d\mu(\xi) =O(\rho(x,y)).
\end{equation}
Consequently, for $x$ and $y$ such that  $\rho(x_0,x) \rightarrow 0$ and $\rho(x_0,y) \rightarrow 0$,
\begin{align}
&\sqrt{\int_{\M} \textrm{tr}\left [ \left \{  q_x(\xi)-q_y(\xi)   \right \}^\top \left \{q_x (\xi) - q_y(\xi)\right \} \right ] \d\mu(\xi)}
= \sqrt{\int_{\mathcal{A}_{\delta}(x_0)} + \int_{\M \setminus \mathcal{A}_\delta(x_0)}  }\nonumber \\
& \hskip 1.0truein = \sqrt{O(\rho(x,y)) +O( \rho(x,y)^2)} \nonumber \\
&\hskip 1.0truein =O \{\rho(x,y)^{1/2} \}.
\label{SD_norm_20}
\end{align}

The relevant class of functions here is
\[
\mathcal{F}_{\delta_0} =\{q_x(\cdot): \, q_x: \M \rightarrow \mathbb{R}^m , \, \rho(x,x_0)<\delta_0\},
\]
where $0<\delta_0<\delta$ is chosen to be sufficiently small.  Using (\ref{SD_norm_20}) and the fact that $\M$ is compact, it follows  that the integrals in Theorem 2.5.6 of van der Vaart and Wellner (1996) are both finite, so that $\mathcal{F}_{\delta_0}$ is a \textit{Donsker} class.  By Theorem 3.34 of Dudley (2008), the Donsker property is sufficient to guarantee asymptotic equicontinuity, which in turn implies that 
\begin{equation}
\vert \vert \Pi_{\hat{\xi}_n,x_0}Z_n(\hat{\xi}_n)-Z_n(x_0)\vert \vert = o_p(1). 
\label{SD_norm_19}
\end{equation}
Thus we have proved that $\vert \vert T_1 \vert \vert \overset{p} \rightarrow 0$.
 One further comment: most of the results in the  literature on empirical process theory, including  van der Vaart and Wellner (1996) and Dudley (2008), are usually stated for classes of real-valued functions. To generalise to $\mathbb{R}^m$-valued functions, where $m$ is finite, is a straightforward matter.  In the present context, we simply prove that each component is $o_p(1)$, which follows immediately from the calculations given above.

We now complete the proof of Step 2.  Recall the first equality in (\ref{ensign2}).  Since, from Theorem 1,
\[
R(\hat{\xi}_n, x_0) = o(\rho(\hat{\xi}_n, x_0))=o\left ( \vert \vert \exp_{x_0}^{-1}(\hat{\xi}_n)\vert \vert \right)\!,
\]

it follows that
\[
\sqrt{n}R(\hat{\xi}_n,x_0)=o\left (\sqrt{n} \vert \vert \exp_{x_0}^{-1}(\hat{\xi}_n)\vert \vert \right)\!.
\]
Moreover, it has already been shown that $\vert \vert T_1 \vert \vert \overset{p}  \rightarrow 0$, and we know from condition (v) of Theorem 2 that $\Psi_{\mu}(x_0)^{-1}$ is a fixed matrix with bounded elements and that 
$\vert \vert \Psi_\mu(x_0)^{-1}Z_n(x_0)\vert \vert =O_p(1)$ due to the central limit theorem.  Consequently, it must be the case that $\sqrt{n} \vert \vert \exp_{x_0}^{-1}(\hat{\xi}_n)\vert \vert = O_p(1)$.  Hence $\sqrt{n}\vert \vert R(\hat{\xi}_n,x_0)\vert \vert =o_p(1)$
and therefore $\vert \vert Y \vert \vert =o_p(1)$ as claimed.  \hfill $\square$

\section*{Appendix A: Proof of Proposition 2}

For a Riemannian manifold $(\M,g)$ with connection $\nabla$, the Sasaki metric $\hat g$ on the tangent bundle $\T\!\!\M$ is a natural Riemannian metric that has the properties that (i) horizontal and vertical distributions are orthogonal; (ii) the metric induced on the fibers is Euclidean; (iii) the canonical projection from $(\T\!\!\M,\hat g)$ to $(\M,g)$ a Riemannian submersion. More precisely, it is determined by
\[\hat g_{(x,u)}(X^h,Y^h)=g_x(X,Y),\quad \hat g_{(x,u)}(X^v,Y^h)=0,\quad \hat g_{(x,u)}(X^v,Y^v)=g_x(X,Y)\]
for all vector fields $X,Y\in\mathcal{C}^\infty(\T\!\!\M)$, where $X^h$ and $X^v$ respectively denote the horizontal and vertical lifts of $X$ and $Y$ to $\T(\T\!\!\M)$. A smooth curve $\Gamma(t)=(\gamma(t),u(t))$ in $\T\!\!\M$ is a geodesic under the Sasaki metric if and only if it satisfies 
\begin{eqnarray}
	\nabla_{\dot\gamma(t)}\dot\gamma(t)+R\left(u(t),\nabla_{\dot\gamma(t)}u(t)\right)\,\dot\gamma(t)=0
	\label{eqn1}
\end{eqnarray}
and 
\begin{eqnarray}
	\nabla_{\dot\gamma(t)}\nabla_{\dot\gamma(t)}u(t)=0.
	\label{eqn2a}
\end{eqnarray}
In particular, if $\gamma(t)$ is a geodesic on $\M$ and $u(t)$ is a parallel vector field along $\gamma(t)$, then $\Gamma(t)=(\gamma(t),u(t))$ satisfies the conditions \eqref{eqn1} and \eqref{eqn2a}, so that $\Gamma(t)$ is a geodesic on $\T\!\!\M$ equipped with the Sasaki metric. Such a $\Gamma(t)$ is called a horizontal lift of $\gamma(t)$. 

\vskip 6pt
\noindent {\it Proof of Proposition 2}.
	It is sufficient to show that, for any $\delta>0$, there is $\delta_1>0$ such that, if $\rho(x,y)<\delta_1$, then for any $y'\in{\C}_y$, there is an $x'\in{\C}_x$ such that $\rho(x',y')<\delta$.
	
For this, we note first that the `full' exponential map $\hbox{Exp}\!:\T\!\!\M\longrightarrow\M$ is $\mathcal C^\infty$ and that, since $\M$ is compact, the distance $\tilde\rho(x,v)$ of $x$ to the cut point of $x$ along the geodesic $\exp_x(tv)$ is a continuous function to $\mathbb{R}^+$ on the unit sphere bundle $\S\!\M = \{(x,v)\in\T\!\!\M\mid\|v\|=1\}$ in the tangent bundle. Thus, it follows from the compactness of $\M$ that the function $R$ defined by
	\[R: \S\!\M\rightarrow\M;\,v_x=(x,v)\mapsto\exp_x(\tilde\rho(x,v)v)\]
	is uniformly continuous. Hence, for any $\delta>0$, there is $\delta_1>0$ such that, for any $v_x=(x,v),\,w_y=(y,w)\in\S\!\M$, $\hat\rho(v_x,\,w_y)<\delta_1$ implies that
	\begin{eqnarray}
		\rho(R(v_x),\,R(w_y))<\delta,
		\label{eqn3a}
	\end{eqnarray}
where $\hat\rho$ is the distance function on $\T\!\!\M$ induced by the Riemannian metric $\hat g$.
	
	\vskip 6pt
	Now, for given $x\in\M$, assume $\rho(x,y)<\delta_1$. For any $y'$ in $\C_y$, let $w\in \S_y(\M)$ such that $y'=R(w_y)$. For such a $w_y=(y,w)\in\S\!\M$, take $v\in\S_x(\M)$ to be the parallel transport of $w$ to $x$ along the geodesic from the $y$ to $x$. Then, $v_x=(x,v)\in\S\!\M$. By the remark following \eqref{eqn2}, the distance $\hat\rho(v_x,w_y)$ between $v_x$ and $w_y$ on $\T\!\!\M$ is equal to $\rho(x,y)<\delta_1$ on $\M$. Take $x'=R(v_x)$. Then, $x'\in\C_x$ and it follows from \eqref{eqn3a} that $\rho(x',y')=\rho(R(v_x),\,R(w_y))<\delta$ as required.  \hfill $\square$

\section*{Appendix B: Proof of Lemma 5}

We first prove (\ref{first_bit}).  For any  events $A$ and $B$ we have
\begin{equation}
\mathbb{P}[A]-\mathbb{P}[B^c] \leq \mathbb{P}[A \cap B] \leq \mathbb{P}[A].
\label{A.1}
\end{equation}
Also, if $w$ has positive components, $A_0=\{X+Y \leq x\}$, $A_1=\{X \leq x+w\}$, $A_2=\{X \leq x-w\}$, $A_3=\{X \leq x\}$ and $B=\{\vert Y \vert \leq w\}$, then
\[
A_2 \cap B \subseteq A_0 \cap B \subseteq A_1 \cap B,
\]
and consequently,
\begin{equation}
\mathbb{P}[A_2 \cap B] \leq \mathbb{P}[A_0 \cap B] \leq \mathbb{P}[A_1 \cap B].
\label{A.2}
\end{equation}
Applying the LHS  ineqality of (\ref{A.1}) to the LHS of (\ref{A.2})  and the RHS inequality of (\ref{A.1}) to the RHS of (\ref{A.2}), we obtain
\[
\mathbb{P}[A_2] - \mathbb{P}[B^c] \leq \mathbb{P}[A_0 \cap B] \leq \mathbb{P}[A_1].
\]
From this it follows that
\[
\mathbb{P}[A_3] - \mathbb{P}[A_2]+\mathbb{P}[B^c] \geq \mathbb{P}[A_3]-\mathbb{P}[A_0 \cap B]
\]
and 
\[
\mathbb{P}[A_0 \cap B] - \mathbb{P}[A_3]  \leq \mathbb{P}[A_1] - \mathbb{P}[A_3].
\]
Consequently,
\begin{align*}
\vert  \mathbb{P}[A_0 \cap B] - \mathbb{P}[A_3]   \vert & \leq \mathbb{P}[A_1]- \mathbb{P}[A_3] + \mathbb{P}[A_3]- \mathbb{P}[A_2] + \mathbb{P}[B^c]\\
&= \mathbb{P}[A_1]- \mathbb{P}[A_2] + \mathbb{P}[B^c].
\end{align*}
Moreover, since $\mathbb{P}[A_0 \cap B]=\mathbb{P}[A_0] - \mathbb{P}[A_0 \cap B^c]$ and for any real numbers $a$ and $b$, $\vert a-b\vert \geq \vert a\vert - \vert b \vert$, it follows that
\begin{align*}
\vert  \mathbb{P}[A_0 \cap B] - \mathbb{P}[A_3]   \vert &=\vert \mathbb{P}[A_0] - \mathbb{P}[A_3]- \mathbb{P}[A_0\cap B^c] \vert\\
&\geq \vert \mathbb{P}[A_0] - \mathbb{P}[A_3] \vert - \mathbb{P}[A_0 \cap B^c].
\end{align*}
Therefore, since $\mathbb{P}[A_0 \cap B^c] \leq \mathbb{P}[B^c]$, it follows that
\[
\vert \mathbb{P}[A_0] - \mathbb{P}[A_3] \vert \leq  \mathbb{P}[A_1]- \mathbb{P}[A_2] +2 \mathbb{P}[B^c].
\]

The inequality in (\ref{assymnorm2}) follows easily from (\ref{asymptoticnormal}), because (\ref{asymptoticnormal}) implies that
\[
\vert \mathbb{P}[X \leq x] - \Phi(x;\Sigma)\vert \leq \epsilon \hskip 0.2truein \hbox{and}  \hskip 0.2truein \vert \mathbb{P}[X \leq x \pm w] - \Phi(x \pm w;\Sigma)\vert \leq \epsilon 
\]
for all $x \in \mathbb{R}^m$, and for all $w \in \mathbb{R}^m$ with positive components.  Moreover, using Taylor expansion it is easily shown that
\[
\vert \Phi (x \pm w; \Sigma) - \Phi(x; \Sigma) \vert  \leq c_0 \vert \vert w \vert \vert ,
\]
where $c_0>0$ is a suitable constant depending only on $\Sigma$.
The proof is now complete.  \hfill $\square$


\end{document}